\newcommand{\lr}[1]{\langle #1 \rangle}
\newcommand{\lra}{\leftrightarrow}
\newcommand{\bis}{\mathrel{\mathchoice%
{\raisebox{.3ex}{$\,
  \underline{\makebox[.7em]{$\leftrightarrow$}}\,$}}%
{\raisebox{.3ex}{$\,
  \underline{\makebox[.7em]{$\leftrightarrow$}}\,$}}%
{\raisebox{.2ex}{$\,
  \underline{\makebox[.5em]{\scriptsize$\leftrightarrow$}}\,$}}%
{\raisebox{.2ex}{$\,
  \underline{\makebox[.5em]{\scriptsize$\leftrightarrow$}}\,$}}}}
\newcommand{\C}{\textbf{C}}
\newcommand{\BP}{\textbf{P}}
\newcommand{\Ag}{\textbf{I}}
\newcommand{\M}{\ensuremath{\mathcal{M}}}
\newcommand{\N}{\ensuremath{\mathcal{N}}}
\renewcommand{\phi}{\varphi}
\renewcommand{\hat}{\widehat}
\newcommand{\weg}[1]{}
\newtheorem{theorem}{Theorem}
\newtheorem{lemma}[theorem]{Lemma}
\newtheorem{definition}[theorem]{Definition}
\newtheorem{claim}[theorem]{Claim}
\newtheorem{proposition}[theorem]{Proposition}
\newtheorem{corollary}[theorem]{Corollary}
\newtheorem{fact}[theorem]{Fact}
\begin{document}

\title{A road to ultrafilter extensions\thanks{This research is funded by the project 17CZX053 of National Social Science Fundation of China. The author would like to thank two anonymous referees of CLAR 2018 for their insightful comments. An earlier version of the manuscript was presented on the conference of CLAR 2018 in Hangzhou in June 2018 and included in the informal proceedings of the conference.}}

\date{}
\author{Jie Fan\\
School of Humanities, University of Chinese Academy of Sciences, China,\\
jiefan@ucas.ac.cn}

\maketitle

\begin{abstract}
We propose a uniform method of constructing ultrafilter extensions from canonical models, which is based on the similarity between ultrafilters and maximal consistent sets. This method can help us understand why the known ultrafilter extensions of models for normal modal logics and for classical modal logics are so defined. We then apply this method to obtain ultrafilter extensions of models for Kripke contingency logics and for neighborhood contingency logics.
\end{abstract}

\noindent Keywords: ultrafilter extensions, canonical models, ultrafilters, maximal consistent sets, model theory

\section{Introduction}

The notion of ultrafilter extensions goes back to Stone's representation theorem\weg{Stone representation for Boolean algebras}~\cite{Stone:1936} and the J\'{o}nsson-Tarski theorem~\cite{JonssonTarkski:1951}, and it is introduced in~\cite{vanbenthem:1979} (see~\cite[p.~372]{ChagrovZakharyaschev:1997}). As a classical result in model theory, ultrafilter extensions have played an important role in various non-classical logics, such as modal logic, temporal logic, dynamic logic, and so on. For instance, by using a notion of ultrafilter extension, \cite{vanbenthem:1979} semantically characterizes a kind of complete modal logics, called `canonical modal logics' introduced in~\cite{Fine:1975}. Besides, it is a crucial concept in various important results, such as the above-mentioned J\'{o}nsson-Tarski theorem, a second bisimilarity-somewhere-else result, Goldblatt-Thomason Theorem, van Benthem Characterization Theorem, and many others~(e.g.~\cite{GoldblattThomason:1975,vanbenthem:1984,vanbenthem:1988,Venema:1999,blackburnetal:2001,Kupkeetal:2005,Fanetal:2014,Bakhtiarietal:2017}).




Usually, a suitable notion of ultrafilter extensions should have the following nice properties (e.g.~\cite{blackburnetal:2001}):
\begin{itemize}
\item The ultrafilter extension and its original model are logically equivalent;
\item Ultrafilter extensions are saturated in some proper sense, i.e. in the sense that the class of saturated models has the Hennessy-Milner property: logical equivalence is equal to bisimilarity.
\end{itemize}

The two properties follows that
\begin{itemize}
\item Logical equivalence can be characterized as bisimilarity-somewhere-else, that is, between ultrafilter extension.
\end{itemize}


Compared to other model operations like disjoint unions, general submodels, and bounded morphisms, constructing ultrafilter extensions is seen as a far less natural job. Despite the fact that there are various notions of ultrafilter extensions of models, for instance, \cite{Goranko:2007,Saveliev:2011} for first-order logic, \cite{GoldblattThomason:1975} for normal modal logics, \cite{DBLP:journals/corr/abs-0901-4430} for classical modal logics, \cite{Jacobs:2001,Kupkeetal:2005} for coalgebras\weg{coalgebraic modal logics}, to our knowledge, however, there has been no uniform method to constructing ultrafilter extensions in the literature, and it is always seen to be hard to find a suitable notion of ultrafilter extensions (e.g.~\cite{blackburnetal:2001}). This paper proposes a uniform method of constructing ultrafilter extensions of models.\footnote{Note that ultrafilter extensions of {\em frames} follow directly by leaving out the valuations.}

The method is via a two-step transformation from the notion of canonical models. That is, given a canonical model, we can transform it into a desired ultrafilter extension in two steps. This is due to the similarity between ultrafilters and maximal consistent sets. For instance, an ultrafilter contain the whole domain, is closed under intersection and supersets, does not contain the empty set, and contains exactly one of any set and its complement; and a maximal consistent set contains the tautologies, is closed under conjunction and logical implication, does not contain the contradictions, and contains exactly one of any formula and its negation. It is these similar properties that makes the transformation from a canonical model to the corresponding ultrafilter extension workable. As we shall see, via this method, we can construct the desired ultrafilter extensions quite easily, in an automatic way.\footnote{It should be noted that~\cite{Goranko:2007} introduces a construction of ultrafilter extensions for arbitrary structures from universal-algebraic perspective, which is quite different from ours. Their construction is more like a definition based on ultrafilter extensions of the domain, relations and functions of structures, rather than a (transformation) method. Besides, as the author himself remarked, the `approaches' to ultrafilter extensions of functions either lead to complications in the algebraic theory of ultrafilter extensions, or is not completely satisfactory and have at least two natural rivals for the title `ultrafilter extensions' of functions.}

The structure of the paper is outlined as follows. After the basics of ultrafilters and maximal consistent sets followed by an important theorem (Sec.~\ref{sec.preliminaries}), we illustrate the method of constructing the known notions of ultrafilter extensions of models for normal modal logics (Subsec.~\ref{sec.normalmodal}) and for classical modal logics (Subsec.~\ref{sec.classicalmodal}). Then we apply the method to contingency logic, and obtain the ultrafilter extensions of Kripke models (Subsec.~\ref{sec.kripkecontingency}) and of neighborhood models (Subsec.~\ref{sec.neighborhoodcontingency}). We conclude with some discussions in Sec.~\ref{sec.conclusion}.

\section{Preliminaries}\label{sec.preliminaries}

Throughout the paper, we use $\overline{X}$ to denote the complement of the set $X$.

\begin{definition}[Ultrafilters]\label{def.ultrafilter} Let $S$ be a nonempty set. A set $U\subseteq \mathcal{P}(S)$ is an {\em ultrafilter} over $S$, if $U$
\begin{enumerate}
\item[(i)] Contains the whole set: $S\in U$.
\item[(ii)] Closed under intersection: if $X,Y\in U$, then $X\cap Y\in U$.
\item[(iii)] Closed under supersets: if $X\in U$ and $X\subseteq Z\subseteq S$, then $Z\in U$.
\item[(iv)] Does not contain empty set: $\emptyset\notin U$.
\item[(v)] for all $X\subseteq S$, $X\in U$ iff $\overline{X}\notin U$.
\end{enumerate}
\end{definition}

There is an important class of ultrafilters, called `principal ultrafilters'. Given a nonempty set $S$ and an element $w\in S$, the {\em principal ultrafilter} $\pi_w$ generated by $w$ is the filter generated by the singleton set $\{w\}$; in symbol, $\pi_w=\{X\subseteq S\mid w\in X\}$.

\begin{theorem}[Ultrafilter Theorem] Any proper filter over $S$ can be extended to an ultrafilter over $S$. As a corollary, any subset of $\mathcal{P}(S)$ with the finite intersection property can be extended to be an ultrafilter over $S$.
\end{theorem}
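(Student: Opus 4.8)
The plan is to prove the first statement by a standard Zorn's Lemma argument and then read off the corollary by exhibiting a canonical proper filter attached to any family with the finite intersection property. Recall first that a \emph{filter} over $S$ is a subset of $\mathcal{P}(S)$ satisfying conditions (i)--(iii) of Definition~\ref{def.ultrafilter}, and it is \emph{proper} when it also satisfies (iv), i.e.\ $\emptyset\notin F$; so an ultrafilter is precisely a proper filter that in addition satisfies (v). Given a proper filter $F$ over $S$, let $\mathbb{P}$ be the poset of all proper filters over $S$ that include $F$, ordered by $\subseteq$. It is nonempty since $F\in\mathbb{P}$. I would then check that every nonempty chain $\mathcal{C}\subseteq\mathbb{P}$ has an upper bound in $\mathbb{P}$, namely $\bigcup\mathcal{C}$: it contains $S$; it is closed under supersets trivially; it is closed under binary intersection because any two of its members lie in a common element of the chain (using that $\mathcal{C}$ is linearly ordered); and it omits $\emptyset$ because every member of the chain does. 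By Zorn's Lemma, $\mathbb{P}$ has a maximal element $U$, and it remains only to verify that $U$ satisfies (v).

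For (v), one direction is immediate: if both $X\in U$ and $\overline{X}\in U$, then $\emptyset=X\cap\overline{X}\in U$ by closure under intersection, contradicting properness. For the converse, suppose $X\notin U$; I claim $\overline{X}\in U$. Consider $U^{+}=\{Z\subseteq S \mid Y\cap X\subseteq Z \text{ for some } Y\in U\}$, the filter generated by $U\cup\{X\}$; it automatically satisfies (i)--(iii) and contains both $U$ and $X$. If $U^{+}$ were proper, it would be an element of $\mathbb{P}$ strictly larger than $U$ (strictly, because $X\in U^{+}\setminus U$), contradicting maximality. Hence $\emptyset\in U^{+}$, so there is $Y\in U$ with $Y\cap X=\emptyset$, i.e.\ $Y\subseteq\overline{X}$; since $U$ is closed under supersets, $\overline{X}\in U$. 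Thus $U$ is an ultrafilter extending $F$.

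For the corollary, given $A\subseteq\mathcal{P}(S)$ with the finite intersection property, let $F$ be the set of all $Z\subseteq S$ such that $X_{1}\cap\cdots\cap X_{n}\subseteq Z$ for some finite (possibly empty) list $X_{1},\dots,X_{n}\in A$. This $F$ is a filter including $A$, and it is proper precisely because the finite intersection property guarantees $X_{1}\cap\cdots\cap X_{n}\neq\emptyset$, so no such intersection lies below $\emptyset$. Applying the first part to $F$ yields an ultrafilter $U\supseteq F\supseteq A$, as required. The one genuinely non-elementary ingredient is the appeal to Zorn's Lemma (equivalently, the Axiom of Choice); everything else is a routine unwinding of the filter axioms, the only mildly delicate point being the argument that maximality of $U$ forces $U^{+}$ to be improper and hence $\overline{X}\in U$.
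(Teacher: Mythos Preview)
Your argument is the standard, correct proof via Zorn's Lemma, and the verification that a maximal proper filter is an ultrafilter is handled cleanly. The corollary is also derived correctly by passing to the filter generated by $A$.

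Note, however, that the paper does not actually supply a proof of this theorem: it is merely stated as a well-known background result and invoked later in the constructions. So there is no ``paper's own proof'' to compare against; your proposal simply fills in what the paper takes for granted. If anything, you might remark explicitly that the Ultrafilter Theorem is strictly weaker than the full Axiom of Choice (it is equivalent to the Boolean Prime Ideal Theorem), though appealing to Zorn's Lemma as you do is entirely acceptable here.
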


\begin{definition}[Maximal consistent sets] Let $\Sigma$ be a set of formulas. $\Sigma$ is said to be consistent, if $\Sigma\nvdash \bot$; it is said to be maximal, if for all formulas $\phi$, we have $\phi\in \Sigma$ or $\neg\phi\in \Sigma$; it is said to be {\em maximal consistent}, if it is maximal and also consistent.\footnote{Strictly speaking, notions of consistency and maximality, respectively, refer to a proof system and a language. But we leave out the references for simplicity.}
\end{definition}

Here is a list of some properties of maximal consistent sets (not exclusively). We choose these properties but not others, is because we would like to make clear the similarities between ultrafilters and  maximal consistent sets.\footnote{Note that there is a difference between the properties of these two notions: although it holds that $\phi\vee\psi\in\Sigma$ iff $\phi\in \Sigma$ or $\psi\in\Sigma$, it is {\em not} the case that $X\cup Y\in U$ iff $X\in U$ or $Y\in U$. The author would like to thank Christoph Benzm\"{u}ller for posing a related question and reminding the author of the difference on the conference of CLAR 2018.}

\begin{fact}[Properties of maximal consistent sets]\label{fact.mcs} Let $\Sigma$ be a maximal consistent set. Then
\begin{enumerate}
\item[(i)] $\top\in \Sigma$.
\item[(ii)] if $\phi,\psi\in\Sigma$, then $\phi\land\psi\in\Sigma$.
\item[(iii)] if $\phi\in \Sigma$ and $\vdash\phi\to\psi$, then $\psi\in \Sigma$.
\item[(iv)] $\bot\notin \Sigma$.
\item[(v)] for all formulas $\phi$, $\phi\in\Sigma$ iff $\neg\phi\notin \Sigma$.
\end{enumerate}
\end{fact}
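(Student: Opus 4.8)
The plan is to first isolate the one structural fact that drives all five clauses: a maximal consistent set is \emph{deductively closed}, i.e.\ $\Sigma\vdash\phi$ implies $\phi\in\Sigma$. To see this, suppose $\Sigma\vdash\phi$ but $\phi\notin\Sigma$; by maximality $\neg\phi\in\Sigma$, hence $\Sigma\vdash\neg\phi$ as well, and since $\{\phi,\neg\phi\}$ is propositionally inconsistent, transitivity of $\vdash$ gives $\Sigma\vdash\bot$, contradicting consistency. Combining deductive closure with monotonicity of $\vdash$ yields a second observation: $\Sigma$ contains every theorem, since $\vdash\phi$ gives $\Sigma\vdash\phi$ and hence $\phi\in\Sigma$. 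These two observations should be stated and proved at the outset.

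With them in hand, each clause is essentially a one-liner. For (i), $\top$ is a theorem, so $\top\in\Sigma$. For (iv), if $\bot\in\Sigma$ then $\Sigma\vdash\bot$, contradicting consistency. For (iii), from $\phi\in\Sigma$ and $\vdash\phi\to\psi$ we get $\Sigma\vdash\phi$ and $\Sigma\vdash\phi\to\psi$, so $\Sigma\vdash\psi$ by modus ponens and $\psi\in\Sigma$ by deductive closure. For (ii), the formula $\phi\to(\psi\to(\phi\land\psi))$ is a theorem, so from $\phi,\psi\in\Sigma$ two applications of modus ponens inside $\Sigma$ give $\Sigma\vdash\phi\land\psi$, hence $\phi\land\psi\in\Sigma$. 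For (v), if both $\phi\in\Sigma$ and $\neg\phi\in\Sigma$ then $\Sigma\vdash\bot$ exactly as in the proof of deductive closure, contradicting consistency, so $\phi\in\Sigma$ implies $\neg\phi\notin\Sigma$; conversely $\neg\phi\notin\Sigma$ forces $\phi\in\Sigma$ by maximality, giving the biconditional.

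I do not expect any genuine obstacle: the argument is entirely routine. The only point requiring care is pinning down what is assumed about the ambient proof system — namely that it extends classical propositional logic (so that $\top$, $\phi\to(\psi\to(\phi\land\psi))$, and $\phi\land\neg\phi\to\bot$ are derivable), that it is closed under modus ponens, and that $\vdash$ is monotone and transitive. All of these hold for the Hilbert-style systems used later in the paper, and since no modal principle is invoked, the proof is uniform across every logic treated here.
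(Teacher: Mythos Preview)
Your proposal is correct and entirely standard: isolating deductive closure first and then reading off each clause is the usual route, and you have handled the only delicate point (what is assumed about the ambient Hilbert system) appropriately.

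The paper itself does not prove this fact at all; it is simply stated as a well-known property of maximal consistent sets, included only to display the analogy with the ultrafilter axioms in Definition~\ref{def.ultrafilter}. So there is nothing to compare your argument against, and your write-up would serve as a perfectly acceptable supplement.
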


\begin{lemma}[Lindenbaum's Lemma]
Every consistent set can be extended to a maximal consistent set.
\end{lemma}

Let $\circledcirc$ be an arbitrary modal operator, and $\mathcal{L}(\circledcirc)$ be the extension of the language of classical propositional logic $\mathcal{L}$ enriched with the primitive modality $\circledcirc$.

\begin{theorem}\label{prop.general}
Let $\Sigma$ be a maximal consistent set. If for all $\phi\in\mathcal{L(\circledcirc)}$, we have
$$(\ast)~~~~~~\text{for each }p\in\BP\cup\{\circledcirc \phi\}, p\in \Sigma\text{ iff }\Sigma\in V(p),$$
then for all $\phi\in\mathcal{L(\circledcirc)}$, we have
$$\phi\in\Sigma\text{ iff }V(\phi)\in \pi_\Sigma.$$
\end{theorem}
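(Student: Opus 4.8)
The plan is to prove this by induction on the structure of formulas $\phi\in\mathcal{L}(\circledcirc)$, where the claim to be established is the biconditional $\phi\in\Sigma \iff V(\phi)\in\pi_\Sigma$. Recall that by definition of the principal ultrafilter, $V(\phi)\in\pi_\Sigma$ means simply $\Sigma\in V(\phi)$, so the target is really the equivalence $\phi\in\Sigma \iff \Sigma\in V(\phi)$. The hypothesis $(\ast)$ hands us exactly this equivalence for two kinds of "atoms": propositional variables $p\in\BP$, and formulas of the form $\circledcirc\phi$. So the induction should treat these as base cases and then propagate the equivalence through the Boolean connectives.

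\begin{proof}
We proceed by induction on the structure of $\phi\in\mathcal{L}(\circledcirc)$, showing that $\phi\in\Sigma$ iff $\Sigma\in V(\phi)$; since $\pi_\Sigma=\{X\mid \Sigma\in X\}$, this is precisely the statement $\phi\in\Sigma$ iff $V(\phi)\in\pi_\Sigma$.

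\emph{Base cases.} If $\phi=p\in\BP$, then $p\in\Sigma$ iff $\Sigma\in V(p)$ is immediate from $(\ast)$. If $\phi=\circledcirc\psi$ for some $\psi\in\mathcal{L}(\circledcirc)$, then again $(\ast)$ (applied with $\circledcirc\psi$ in the role of the distinguished formula) gives $\circledcirc\psi\in\Sigma$ iff $\Sigma\in V(\circledcirc\psi)$. Note that here we do not need the induction hypothesis on $\psi$ at all: the clause $(\ast)$ already fixes the truth set of every formula whose main connective is $\circledcirc$, treating it as a generalized atom. This is what makes the argument uniform in the choice of modality $\circledcirc$.

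\emph{Inductive steps.} Suppose the equivalence holds for $\psi$ and $\chi$. For $\phi=\neg\psi$: using Fact~\ref{fact.mcs}(v), $\neg\psi\in\Sigma$ iff $\psi\notin\Sigma$ iff (by IH) $\Sigma\notin V(\psi)$ iff $\Sigma\in\overline{V(\psi)}=V(\neg\psi)$. For $\phi=\psi\land\chi$: by Fact~\ref{fact.mcs}(ii) together with the fact that a maximal consistent set is closed under logical consequence (Fact~\ref{fact.mcs}(iii), applied to the conjunction-elimination tautologies), $\psi\land\chi\in\Sigma$ iff both $\psi\in\Sigma$ and $\chi\in\Sigma$, which by IH holds iff $\Sigma\in V(\psi)$ and $\Sigma\in V(\chi)$, i.e.\ iff $\Sigma\in V(\psi)\cap V(\chi)=V(\psi\land\chi)$. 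The remaining connectives, if taken as primitive, are handled analogously (or simply reduced to $\neg$ and $\land$). This completes the induction.
\end{proof}

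I would flag the following as the only real subtlety rather than an obstacle: one must be careful that the formula-to-set map $V$ is \emph{defined} on all of $\mathcal{L}(\circledcirc)$ in the canonical-model style, so that $V(\neg\psi)=\overline{V(\psi)}$ and $V(\psi\land\chi)=V(\psi)\cap V(\chi)$ hold by stipulation on the canonical model; the content of the theorem is then entirely about matching membership in $\Sigma$ with membership in $V(\phi)$, and the genuinely new input beyond routine maximal-consistent-set bookkeeping is the hypothesis $(\ast)$ supplying the modal and propositional-atom cases. Since the Boolean cases only use standard properties of maximal consistent sets (Fact~\ref{fact.mcs}) and the modal case is handed to us outright, there is no hard step — the point of the theorem is precisely that, once $(\ast)$ is arranged, everything else is automatic.
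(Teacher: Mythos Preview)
Your proof is correct and follows essentially the same approach as the paper: induction on the structure of $\phi$, with the propositional-variable and $\circledcirc$-cases handled directly by $(\ast)$ as base cases, and the Boolean connectives handled via the standard maximal-consistent-set properties of Fact~\ref{fact.mcs}. The only cosmetic difference is that you unpack $V(\phi)\in\pi_\Sigma$ as $\Sigma\in V(\phi)$ at the outset, whereas the paper keeps the $\pi_\Sigma$ notation and cites the corresponding ultrafilter properties (Def.~\ref{def.ultrafilter}(ii),(iii),(v)) in parallel with Fact~\ref{fact.mcs}; these are the same argument.
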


\begin{proof}
Suppose $(\ast)$ holds. We proceed with induction on $\phi\in\mathcal{L}(\circledcirc)$.
\begin{itemize}
\item $\phi=p\in\BP\cup\{\circledcirc \psi\}$. By $(\ast)$, $p\in\Sigma$ iff $\Sigma\in V(p)$. This is equivalent to $V(p)\in \pi_\Sigma$ according to the definition of $\pi_\Sigma$.
\item $\phi=\neg\psi$. We have the following equivalences:
\[
\begin{array}{ll}
&\neg\phi\in \Sigma\\
\stackrel{\text{Fact~}\ref{fact.mcs}(v)}\iff &\phi\notin \Sigma\\
\stackrel{\text{IH}}\iff&V(\phi)\notin \pi_\Sigma\\
\stackrel{\text{Def.~}\ref{def.ultrafilter}(v)}\iff&\overline{V(\phi)}\in \pi_\Sigma\\
\iff&V(\neg\phi)\in \pi_\Sigma\\
\end{array}
\]
\item $\phi=\psi\land\chi$. We have the following equivalences.
\[
\begin{array}{ll}
&\psi\land\chi\in \Sigma\\
\stackrel{\text{Fact~}\ref{fact.mcs}(ii)(iii)}\iff &\psi\in \Sigma\text{ and }\chi\in\Sigma\\
\stackrel{\text{IH}}\iff&V(\psi)\in \pi_\Sigma\text{ and }V(\chi)\in \Sigma\\
\stackrel{\text{Def.~}\ref{def.ultrafilter}(ii)(iii)}\iff&V(\psi)\cap V(\chi)\in \pi_\Sigma\\
\iff&V(\psi\land\chi)\in \pi_\Sigma\\
\end{array}
\]
\end{itemize}
\end{proof}

\weg{Since all principal ultrafilters are ultrafilters (cf. e.g.~\cite{blackburnetal:2001}), we have
\begin{proposition}
Let $\Sigma$ be a maximal consistent set. If
$$(\ast)~~~~~~\text{for each }p\in\BP, p\in \Sigma\text{ iff }\Sigma\in V(p),$$
then there exists an ultrafilter $U$ such that for all $\phi\in\mathcal{L}$, we have
$$\phi\in\Sigma\text{ iff }V(\phi)\in U.$$
\end{proposition}}

The relationship between maximal consistent sets and ultrafilters provides us with a road to the construction of ultrafilter extensions from that of canonical models, as will be illustrated in the setting of modal logic.

\section{Examples: ultrafilter extensions in modal logic}
\subsection{Normal modal logics}\label{sec.normalmodal}

Familiarity with the language $\mathcal{L}(\Box)$, Kripke semantics of modal logic, and also normal modal logics is assumed~(cf.~e.g.~\cite{blackburnetal:2001}). Given any nonempty set $S$, define $m_\Box(X)=\{s\in S\mid \text{for all }t,\text{ if }Rst, \text{ then }t\in X\}$. Then $m_\Box(V(\phi))=V(\Box\phi)$ for all $\phi$.

Recall that given a normal modal logic, the canonical model $\M=\lr{S,R,V}$ is defined as follows:\footnote{In this paper, we abuse the notation $\M=\lr{S,R,V}$ to denote an arbitrary model and also the canonical model. Unless otherwise specified, we always use it to mean an arbitrary model, thus there should be no confusion.}
\begin{itemize}
\item $S=\{\Sigma\mid \Sigma\text{ is a maximal consistent set}\}$.
\item $R \Sigma\Gamma$ iff for all $\phi$, if $\Box\phi\in\Sigma$, then $\phi\in\Gamma$.
\item For each $p\in\BP$ and $\Sigma\in S$, $\Sigma\in V(p)$ iff $p\in \Sigma$.
\end{itemize}

The following is a standard result in normal modal logics (cf.~e.g.~\cite{blackburnetal:2001}).
\begin{proposition}Let $\Sigma\in S$. Then for all $\phi\in\mathcal{L}(\Box)$, for each $p\in\BP\cup\{\Box\phi\}$, we have
$$p\in\Sigma\text{ iff }\Sigma\in V(p).$$
\end{proposition}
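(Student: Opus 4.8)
The plan is to prove the two clauses separately, by splitting the conjunction inside the statement. For the propositional-variable case, the equivalence $p \in \Sigma$ iff $\Sigma \in V(p)$ holds \emph{by definition} of the valuation in the canonical model: the last bullet in the definition of $\M$ literally stipulates that for each $p \in \BP$, $\Sigma \in V(p)$ iff $p \in \Sigma$. So that half requires no work beyond citing the construction. The substantive half is the modal case: for an arbitrary formula $\phi \in \mathcal{L}(\Box)$, show $\Box\phi \in \Sigma$ iff $\Sigma \in V(\Box\phi)$, i.e. iff $\Sigma \in m_\Box(V(\phi))$.

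First I would unfold $\Sigma \in m_\Box(V(\phi))$ using the definition of $m_\Box$: this says that for every maximal consistent set $\Gamma$ with $R\Sigma\Gamma$, we have $\Gamma \in V(\phi)$. By the Truth Lemma for the canonical model (the standard result that $\Gamma \in V(\psi)$ iff $\psi \in \Gamma$, provable by induction on $\psi$ — note this is the companion statement being bootstrapped, so strictly one proves the present proposition and the truth lemma together by a simultaneous induction, or one simply invokes the truth lemma as the known standard result the proposition is packaging), the condition becomes: for all $\Gamma \in S$, if $R\Sigma\Gamma$ then $\phi \in \Gamma$. So the goal reduces to the classical Existence/Truth-Lemma equivalence
\[
\Box\phi \in \Sigma \quad\Longleftrightarrow\quad \text{for all }\Gamma\in S,\ \text{if }R\Sigma\Gamma\text{ then }\phi\in\Gamma.
\]

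The left-to-right direction is immediate from the definition of the canonical relation $R$: if $\Box\phi \in \Sigma$ and $R\Sigma\Gamma$, then by the very definition of $R\Sigma\Gamma$ we get $\phi \in \Gamma$. The right-to-left direction is the classical Existence Lemma and is the main obstacle: I would prove the contrapositive, assuming $\Box\phi \notin \Sigma$ and constructing a witness $\Gamma \in S$ with $R\Sigma\Gamma$ and $\phi \notin \Gamma$. The construction takes the set $\Sigma^{\Box} = \{\psi \mid \Box\psi \in \Sigma\} \cup \{\neg\phi\}$ and shows it is consistent: if it were not, then $\vdash (\psi_1 \land \cdots \land \psi_n) \to \phi$ for finitely many $\psi_i$ with $\Box\psi_i \in \Sigma$, whence by normality (the rule of necessitation plus the K axiom, giving $\vdash (\Box\psi_1 \land \cdots \land \Box\psi_n) \to \Box\phi$) and closure of $\Sigma$ under modus ponens (Fact~\ref{fact.mcs}(iii)) we get $\Box\phi \in \Sigma$, a contradiction. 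Then Lindenbaum's Lemma extends $\Sigma^{\Box}$ to a maximal consistent $\Gamma$; by construction $\{\psi \mid \Box\psi \in \Sigma\} \subseteq \Gamma$, so $R\Sigma\Gamma$, and $\neg\phi \in \Gamma$ gives $\phi \notin \Gamma$ by Fact~\ref{fact.mcs}(v). This closes the induction and hence the proposition; the only delicate point is the standard one, namely the appeal to normality in establishing consistency of $\Sigma^{\Box}$.
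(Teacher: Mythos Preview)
Your argument is correct and is exactly the standard Truth/Existence Lemma proof for the canonical model of a normal modal logic. The paper itself does not supply a proof of this proposition at all: it simply states it as ``a standard result in normal modal logics'' and refers the reader to Blackburn, de Rijke and Venema, where one finds precisely the argument you have written out (definition of $V$ for atoms; Existence Lemma via consistency of $\{\psi\mid\Box\psi\in\Sigma\}\cup\{\neg\phi\}$ using normality, then Lindenbaum). Your observation about the apparent circularity is also apt: in the paper's architecture the Boolean cases are offloaded to Theorem~\ref{prop.general}, so the present proposition is really the modal step of the usual simultaneous induction, with the Truth Lemma for the subformula $\phi$ serving as the inductive hypothesis.
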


Applying Prop.~\ref{prop.general}, we immediately have the following result.
\begin{proposition}\label{prop.box}
Let $\Sigma\in S$. Then for all $\phi\in\mathcal{L(\Box)}$, we have
$$\phi\in\Sigma\text{ iff }V(\phi)\in \pi_\Sigma.$$
\end{proposition}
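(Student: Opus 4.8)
The plan is to apply Theorem~\ref{prop.general} directly, since the only work left is to check that its hypothesis $(\ast)$ holds in the canonical model of a normal modal logic. Concretely, instantiate $\circledcirc$ with $\Box$ so that $\mathcal{L}(\circledcirc)=\mathcal{L}(\Box)$, and recall that $(\ast)$ requires, for every $\phi\in\mathcal{L}(\Box)$ and every $p\in\BP\cup\{\Box\phi\}$, the equivalence $p\in\Sigma$ iff $\Sigma\in V(p)$. But this is exactly the statement of the Proposition immediately preceding, which is the standard canonical-valuation fact together with the Truth Lemma for normal modal logic. So the proof is essentially: ``By the previous Proposition, $(\ast)$ holds; now apply Prop.~\ref{prop.general}.''

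First I would unpack why each half of $(\ast)$ is available. For $p\in\BP$ the equivalence $p\in\Sigma$ iff $\Sigma\in V(p)$ is immediate from the definition of the canonical valuation $V$. For $p=\Box\phi$ the equivalence $\Box\phi\in\Sigma$ iff $\Sigma\in V(\Box\phi)$ is the modal clause of the Truth Lemma: one direction uses the definition of the canonical relation $R$, and the converse direction uses the Existence Lemma (if $\Box\phi\notin\Sigma$, then $\{\neg\phi\}\cup\{\psi\mid\Box\psi\in\Sigma\}$ is consistent, hence extends by Lindenbaum's Lemma to some $\Gamma$ with $R\Sigma\Gamma$ and $\phi\notin\Gamma$). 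Since the excerpt explicitly cites this as ``a standard result in normal modal logics,'' I would simply invoke the preceding Proposition rather than reprove it.

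With $(\ast)$ in hand, the conclusion is immediate: Theorem~\ref{prop.general} yields, for all $\phi\in\mathcal{L}(\Box)$, that $\phi\in\Sigma$ iff $V(\phi)\in\pi_\Sigma$, which is precisely the claim. Thus the body of the proof is two sentences.

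The main obstacle, such as it is, is purely bookkeeping: making sure that the instantiation $\circledcirc=\Box$ is legitimate, i.e. that $\mathcal{L}(\Box)$ is an instance of the abstract language $\mathcal{L}(\circledcirc)$ and that $m_\Box(V(\phi))=V(\Box\phi)$ (noted just before the canonical model definition) matches the role of $\circledcirc\phi$ in $(\ast)$. There is no genuine mathematical difficulty here — the real content was already isolated in Theorem~\ref{prop.general} and in the standard Truth Lemma — so the proof should read essentially as: \textit{Immediate from the previous Proposition and Prop.~\ref{prop.general}.}
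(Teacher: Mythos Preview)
Your proposal is correct and matches the paper's own approach exactly: the paper simply states that the result follows immediately by applying Prop.~\ref{prop.general}, relying on the preceding Proposition for the hypothesis~$(\ast)$. Your additional unpacking of why $(\ast)$ holds (canonical valuation for atoms, Truth Lemma for $\Box\phi$) is sound but goes beyond what the paper spells out.
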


Recall that maximal consistent sets are thought of as a state in the canonical model construction, and from every state one may generate a principal ultrafilter, we thus can obtain the definition of ultrafilter extension of normal modal logics from that of the above canonical model. We will explicate this by two steps.

At first step, from the canonical model above, we replace maximal consistent sets with the corresponding principal ultrafilters, by using Prop.~\ref{prop.box} and the fact that $V(\Box\phi)=m_\Box(V(\phi))$. We obtain a model $ue(\M)=\lr{Uf(S),R^{ue},V^{ue}}$, where
\begin{itemize}
\item $Uf(S)=\{\pi_\Sigma\mid \pi_\Sigma\text{ is an ultrafilter over }S\}$.
\item $R^{ue} \pi_\Sigma\pi_\Gamma$ iff for all $V(\phi)$, if $m_\Box(V(\phi))\in\pi_\Sigma$, then $V(\phi)\in\pi_\Gamma$.
\item For each $p\in\BP$ and $\pi_\Sigma\in Uf(S)$, $\pi_\Sigma\in V^{ue}(p)$ iff $V(p)\in \pi_\Sigma$.
\end{itemize}

At second step, we polish the above model, by generalizing each principal ultrafilter and $V(\phi)$ to an arbitrary ultrafilter and an arbitrary set, respectively. By doing so, we obtain the notion of ultrafilter extension in normal modal logics (e.g.~\cite[Def.~2.57]{blackburnetal:2001}).
\begin{itemize}
\item $Uf(S)=\{s\mid s\text{ is an ultrafilter over }S\}$.
\item $R^{ue} st$ iff for all $X$, if $m_\Box(X)\in s$, then $X\in t$.
\item For each $p\in\BP$ and $s\in Uf(S)$, $s\in V^{ue}(p)$ iff $V(p)\in s$.
\end{itemize}

It is then shown that the notion of ultrafilter extension is the required one in normal modal logics. That is, the constructed ultrafilter extension and the original model are $\mathcal{L}(\Box)$-equivalent; moreover, $\mathcal{L}(\Box)$-equivalence can be characterized as $\Box$-bisimilarity-somewhere-else --- namely, between ultrafilter extensions. For the proof details, refer to e.g.~\cite[Prop.~2.59, Thm.~2.62]{blackburnetal:2001}.

\subsection{Classical modal logics}\label{sec.classicalmodal}

When modal logic is interpreted on neighborhood semantics, we obtain a class of classical modal logics. In this part, we apply our method to construct the ultrafilter extension of a neighborhood model for modal logic.

Given a neighborhood model $\M=\lr{S,N,V}$, the necessity operator is interpreted in the following.
\[
\begin{array}{lll}
\M,s\Vdash\Box\phi&\iff&V(\phi)\in N(s)\\
\end{array}
\]
Where $V(\phi)=\{w\in S\mid \M,w\Vdash\phi\}$.

Define $m_N(X)=\{s\in S\mid X\in N(s)\}$. It is clear that $m_N(V(\phi))=V(\Box\phi)$.

Recall that in the completeness of classical modal logic ${\bf E}$ (cf. e.g.~\cite{Chellas:1980}), the canonical model $\M=\lr{S,N,V}$ is defined as follows:
\begin{itemize}
\item $S=\{\Sigma\mid \Sigma\text{ is a maximal consistent set}\}$.
\item $N(\Sigma)=\{|\phi|\mid \Box\phi\in \Sigma\}$, where $|\phi|=\{\Sigma\in S\mid \phi\in\Sigma\}$.
\item For each $p\in\BP$ and $\Sigma\in S$, $\Sigma\in V(p)$ iff $p\in\Sigma$.
\end{itemize}

The following is a standard result in classical modal logic.
\begin{proposition}\label{prop.comp-classical-modal} Let $\Sigma\in S$. Then for all $\phi\in\mathcal{L}(\Box)$, for each $p\in\BP\cup\{\Box\phi\}$, we have
$$p\in\Sigma\text{ iff }\Sigma\in V(p).$$
\end{proposition}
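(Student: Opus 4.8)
The statement to be proved is the standard truth lemma for the canonical neighborhood model of the classical modal logic $\mathbf{E}$, restricted to showing that the inductive hypothesis on propositional variables and on the single modal formula $\Box\phi$ propagates correctly --- or rather, since this is Proposition~\ref{prop.comp-classical-modal}, the point is to verify the clause $p\in\Sigma$ iff $\Sigma\in V(p)$ for $p\in\BP\cup\{\Box\phi\}$. The plan is to proceed by cases on whether $p$ is a propositional variable or is of the form $\Box\phi$, and for the latter to unpack the definition of $N(\Sigma)$ together with the congruence rule $\REKw$ that is characteristic of $\mathbf{E}$.

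First I would dispatch the case $p\in\BP$: this is immediate from the definition of the valuation $V$ in the canonical model, which stipulates $\Sigma\in V(p)$ iff $p\in\Sigma$. So nothing is to be done here beyond citing the construction. The substantive case is $p=\Box\phi$. Here I would first establish, as a preliminary, the familiar identity $|\phi|=V(\phi)$, i.e. that the "proof set" $\{\Sigma\in S\mid \phi\in\Sigma\}$ coincides with the semantic truth set $\{\Sigma\in S\mid \M,\Sigma\Vdash\phi\}$; this is exactly the truth lemma being proved and so is available for subformulas by the induction that Proposition~\ref{prop.general}-style arguments run (or, more carefully, one runs the whole induction on formula complexity and this identity for $\phi$ is the induction hypothesis applied to the proper subformula $\phi$ of $\Box\phi$). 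Granting that, I would argue the chain of equivalences: $\M,\Sigma\Vdash\Box\phi$ iff $V(\phi)\in N(\Sigma)$ (semantics) iff $|\phi|\in N(\Sigma)$ (by the IH identity just mentioned) iff there is $\psi$ with $\Box\psi\in\Sigma$ and $|\psi|=|\phi|$ (definition of $N(\Sigma)$) iff $\Box\phi\in\Sigma$.

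The last "iff" is the crux and is where I expect the only real work to lie. The direction from right to left is trivial (take $\psi=\phi$). For left to right, suppose $\Box\psi\in\Sigma$ and $|\psi|=|\phi|$. From $|\psi|=|\phi|$ one concludes $\vdash\psi\leftrightarrow\phi$: indeed if $\psi\leftrightarrow\phi$ were not a theorem then $\neg(\psi\leftrightarrow\phi)$ would be consistent, hence extendable by Lindenbaum to some maximal consistent $\Gamma$ containing exactly one of $\psi,\phi$, contradicting $|\psi|=|\phi|$. Then by the congruence/replacement rule $\REKw$ of $\mathbf{E}$ (from $\vdash\psi\leftrightarrow\phi$ infer $\vdash\Box\psi\leftrightarrow\Box\phi$) together with closure of $\Sigma$ under provable implication (Fact~\ref{fact.mcs}(iii)), $\Box\phi\in\Sigma$ follows. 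The only subtlety to flag is bookkeeping about the induction: one must phrase the overall statement as a simultaneous induction on $\phi$, so that the sub-case $\Box\phi$ may invoke the result already for $\phi$; the "for all $\phi$, for each $p\in\BP\cup\{\Box\phi\}$" phrasing in the proposition is exactly designed to feed into Proposition~\ref{prop.general}, so I would keep that formulation and simply note that the $\Box\phi$ clause is the one needing the $\REKw$ argument while the $\BP$ clause is by definition.
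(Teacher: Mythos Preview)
Your argument is correct and is precisely the standard one: the propositional case is immediate from the definition of $V$, and for $\Box\phi$ you unfold the semantics, invoke the inductive identity $|\phi|=V(\phi)$, and then use that $|\psi|=|\phi|$ implies $\vdash\psi\leftrightarrow\phi$ (via Lindenbaum) together with the congruence rule of $\mathbf{E}$ to conclude $\Box\phi\in\Sigma$. Your remark about the bookkeeping is also apt: strictly speaking the proposition and the full truth lemma must be proved by a single induction on formula complexity, since the $\Box\phi$ clause here already presupposes the truth lemma for the subformula $\phi$.

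The paper itself does not supply a proof: it simply states the result as ``a standard result in classical modal logic'' with an implicit pointer to Chellas. So there is no paper proof to compare against; your sketch is the textbook argument that the paper is invoking. One cosmetic point: the macro \verb|\REKw| in this paper expands to $\texttt{RE}\Delta$, the congruence rule for the contingency operator, not for $\Box$; in a written-up version you would want to name the $\Box$-congruence rule of $\mathbf{E}$ directly rather than reuse that macro.
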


Applying Prop.~\ref{prop.general}, we immediately have the following result.
\begin{proposition}\label{prop.modal}
Let $\Sigma\in S$. Then for all $\phi\in\mathcal{L(\Box)}$, we have
$$\phi\in\Sigma\text{ iff }V(\phi)\in \pi_\Sigma.$$
\end{proposition}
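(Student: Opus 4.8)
The plan is to invoke Theorem~\ref{prop.general} with $\circledcirc = \Box$, since the statement of Proposition~\ref{prop.modal} is precisely the conclusion of that theorem instantiated to the language $\mathcal{L}(\Box)$. The only thing to check is that the hypothesis $(\ast)$ of Theorem~\ref{prop.general} is met by the canonical neighborhood model defined just above: namely, that for every maximal consistent $\Sigma \in S$, every $\phi \in \mathcal{L}(\Box)$, and every $p \in \BP \cup \{\Box\phi\}$, we have $p \in \Sigma$ iff $\Sigma \in V(p)$. But this is exactly the content of Proposition~\ref{prop.comp-classical-modal}, the standard Truth Lemma for classical modal logic $\mathbf{E}$.

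Concretely, I would write: fix $\Sigma \in S$. By Proposition~\ref{prop.comp-classical-modal}, hypothesis $(\ast)$ of Theorem~\ref{prop.general} holds for $\Sigma$ (with $\circledcirc$ read as $\Box$). Hence by Theorem~\ref{prop.general}, for all $\phi \in \mathcal{L}(\Box)$ we obtain $\phi \in \Sigma$ iff $V(\phi) \in \pi_\Sigma$, which is the claim. That is the whole argument — it is a one-line deduction from the two preceding results, mirroring exactly how Proposition~\ref{prop.box} was derived from its two predecessors in the normal-modal-logic subsection.

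The only subtlety worth a remark is that $(\ast)$ quantifies over $p \in \BP \cup \{\Box\phi\}$ for a fixed $\phi$, i.e.\ it is the "both atoms and the top-level $\Box$-formula" base clause that Theorem~\ref{prop.general}'s induction consumes; one should be sure that Proposition~\ref{prop.comp-classical-modal} is stated in that same form, which it is. There is no real obstacle here: the genuine work — the Truth Lemma for the canonical neighborhood model, which is where the definition $N(\Sigma) = \{\,|\phi| \mid \Box\phi \in \Sigma\,\}$ together with the replacement axiom $\mathtt{RE}$ is used to show $\Box\phi \in \Sigma$ iff $|\phi| \in N(\Sigma)$ iff $\Sigma \in V(\Box\phi)$ — has already been discharged in Proposition~\ref{prop.comp-classical-modal}, and the propositional and Boolean-combination cases of the induction are handled uniformly inside Theorem~\ref{prop.general} via Facts~\ref{fact.mcs} and Definition~\ref{def.ultrafilter}. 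So the proof is purely a matter of citing the right two results in the right order.
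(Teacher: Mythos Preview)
Your proposal is correct and matches the paper's approach exactly: the paper simply says ``Applying Prop.~\ref{prop.general}, we immediately have the following result,'' which is precisely your argument of combining Proposition~\ref{prop.comp-classical-modal} (to verify hypothesis $(\ast)$) with Theorem~\ref{prop.general}. Your additional remarks about where the real work lies are accurate but go beyond what the paper itself spells out.
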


Now applying the method in Sec.~\ref{sec.normalmodal}, we obtain the notion of ultrafilter extensions in classical modal logics~\cite[Def.~4.20]{DBLP:journals/corr/abs-0901-4430}.

\begin{definition}[Ultrafilter extensions]\label{def.ultraexten-classicalmodal} Let $\M=\lr{S,N,V}$ be a neighborhood model. The triple $ue(\M)=\lr{Uf(S),N^{ue},V^{ue}}$ is the {\em ultrafilter extension} of $\M$, if
\begin{itemize}
\item $Uf(S)=\{s\mid s\text{ is an ultrafilter over }S\}$
\item $N^{ue}(s)=\{\hat{X}\mid m_N(X)\in s\}$, where $\hat{X}=\{s\in Uf(S)\mid X\in s\}$.
\item For each $p\in \BP$ and $s\in Uf(S)$, $s\in V^{ue}(p)$ iff $V(p)\in s$.
\end{itemize}
\end{definition}

It is then shown that the above notion of ultrafilter extension is the required one in classical modal logics. That is, the constructed ultrafilter extension and the original model are $\mathcal{L}(\Box)$-equivalent; moreover, $\mathcal{L}(\Box)$-equivalence can be characterized as behavioural-equivalence-somewhere-else --- namely, between ultrafilter extensions. For the proof details, refer to e.g.~\cite[Lem.~4.24, Thm.~4.27]{DBLP:journals/corr/abs-0901-4430}.

\weg{\begin{proposition}\label{prop.ultrafilter}
For all formulas $\phi\in\mathcal{L}(\Box)$ and all ultrafilters $s$ over $S$, we have $V(\phi)\in s$ iff $ue(\M),s\vDash\phi$. As a corollary, for all $w\in S$, we have $(\M,w)\equiv_{\mathcal{L}(\Box)}(ue(\M),\pi_w)$.
\end{proposition}

\begin{proof}
By induction on $\phi\in\mathcal{L}(\Box)$. The only nontrivial case is $\Box\phi$. For this, we have the following equivalences:
\[
\begin{array}{ll}
&V(\Box\phi)\in s\\
\iff & m_N(V(\phi))\in s\\
\iff &\{s\in Uf(S)\mid V(\phi)\in s\}\in N^{ue}(s)\\
\stackrel{\text{IH}}\iff&V^{ue}(\phi)\in N^{ue}(s)\\
\iff & ue(\M),s\vDash\Box\phi\\
\end{array}
\]
\end{proof}

We have also the following result.
\begin{proposition}
Let $\M$ be a neighborhood model. If $\M$ is monotonic, then so is $ue(\M)$.
\end{proposition}

By adding the property of monotonicity into the original model $\M$, we also construct a notion of ultrafilter extensions in monotonic modal logic, which is much simpler than that in~\cite[Def.~4.34]{hansen2003monotonic}.}

\weg{\section{Example: monotonic modal logics}


Recall that the canonical model $\M=\lr{S,N_+,V}$ for monotonic modal logic ${\bf M}$ is defined as follows:
\begin{itemize}
\item $S=\{\Sigma\mid \Sigma\text{ is a maximal consistent set}\}$.
\item $N_+(\Sigma)=\{X\mid |\phi|\subseteq X\text{ for some }\Box\phi\in \Sigma\}$, where $|\phi|=\{\Sigma\in S\mid \phi\in\Sigma\}$.
\item For each $p\in\BP$ and $\Sigma\in S$, $\Sigma\in V(p)$ iff $p\in\Sigma$.
\end{itemize}

\begin{definition}[Ultrafilter extensions] Let $\M=\lr{S,N,V}$ be a monotonic neighborhood model. The triple $ue(\M)=\lr{Uf(S),N^{ue}_+,V^{ue}}$ is the {\em ultrafilter extension} of $\M$, if
\begin{itemize}
\item $Uf(S)=\{s\mid s\text{ is an ultrafilter over }S\}$
\item $N^{ue}_+(s)=\{X\mid \widehat{Y}\subseteq X\text{ for some }m_N(Y)\in s\}$, where $\widehat{Y}=\{s\in Uf(S)\mid Y\in s\}$.
\item For each $p\in \BP$ and $s\in Uf(S)$, $s\in V^{ue}(p)$ iff $V(p)\in s$.
\end{itemize}
\end{definition}

It should be clear that $N^{ue}_+$ is closed under supersets, thus $ue(\M)$ is monotonic.

Note that even when $\M$ is monotonic, in general we do {\em not} have that $ue(\M)$ in Def.~\ref{def.ultraexten-classicalmodal} is monotonic, since $U\in N^{ue}(s)$ and $U\subseteq W$ but $W$ may not be the form $\widehat{Y}$ for some $Y$. That is why we need to define $N^{ue}_+$ like this, which is guaranteed to be monotonic. 

\begin{fact}\label{fact.monotonic}\
\begin{enumerate}
\item[(1)] If $\widehat{Y}\subseteq \widehat{Z}$, then $Y\subseteq Z$;
\item[(2)] Suppose that $N$ is monotonic, then $m_N$ is also monotonic, namely, if $X\subseteq Y$, then $m_N(X)\subseteq m_N(Y)$.
\end{enumerate}
\end{fact}

\begin{proof}
For (1), suppose that $\widehat{Y}\subseteq \widehat{Z}$ and $w\in Y$. Then $Y\in \pi_w$, since $\pi_w\in Uf(S)$, it follows that $\pi_w\in \widehat{Y}$. By supposition, $\pi_w\in \widehat{Z}$, then $Z\in \pi_w$, viz. $w\in Z$.

For (2), suppose $x\in m_N(X)$, then $X\in N(x)$. By $X\subseteq Y$ and $N(x)$ is closed under supersets, $Y\in N(x)$, and then $x\in m_N(Y)$.
\end{proof}

\begin{proposition}\label{prop.ultrafilter-delta2}
For all formulas $\phi\in\mathcal{L}(\Box)$ and all ultrafilters $s$ over $S$, we have $V(\phi)\in s$ iff $ue(\M),s\Vdash\phi$. As a corollary, for all $w\in S$, we have $(\M,w)\equiv_{\mathcal{L}(\Box)}(ue(\M),\pi_w)$.
\end{proposition}

\begin{proof}
By induction on $\phi\in\mathcal{L}(\Box)$. The nontrivial case is $\Box\phi$.

First, we have $(\ast)$: $\widehat{V(\phi)}=V^{ue}(\phi)$, this is because for all $s\in Uf(S)$, $s\in \widehat{V(\phi)}$ iff $V(\phi)\in s$ iff (by IH) $ue(\M),s\Vdash\phi$ iff $s\in V^{ue}(\phi)$.

We now have the following equivalences:
\[
\begin{array}{ll}
&ue(\M),s\Vdash\Box\phi\\
\iff &V^{ue}(\phi)\in N^{ue}_+(s)\\
\iff &\widehat{V(\phi)}\in N^{ue}_+(s)\\
\iff &\widehat{Y}\subseteq\widehat{V(\phi)} \text{ for some }m_N(Y)\in s\\
\iff &m_N(V(\phi))\in s\\
\iff & V(\Box\phi)\in s\\
\end{array}
\]
Where the penultimate equivalence is due to Fact~\ref{fact.monotonic} and the fact that $s$ is closed under supersets.
\end{proof}

The following definition could be found in~\cite[Def.~4.32]{hansen2003monotonic}, which is due to Pauly~\cite{Pauly:1999}.
\begin{definition}[m-saturation] Let $\M=\lr{S,N,V}$ be monotonic. $\M$ is said to be {\em m-saturated} if the following conditions are satisfied.
\begin{enumerate}
\item[(m1)] For any $\Gamma\subseteq\mathcal{L}(\Box)$ and $X\subseteq S$, if $\Gamma$ is finitely satisfiable in $X$, then $\Gamma$ is also satisfiable in $X$.
\weg{\item[(m1)] For any $\Gamma\subseteq\mathcal{L}(\Box)$, $s\in S$ and $X\subseteq S$ such that $X\in N(s)$, if $\Gamma$ is finitely satisfiable in $X$, then $\Gamma$ is also satisfiable in $X$.
\item[(m1')] For any $\Gamma\subseteq\mathcal{L}(\Box)$, $s\in S$ and $X,Y\subseteq S$ such that $X\in N(s)$ and $X\subseteq Y$, if $\Gamma$ is finitely satisfiable in $X$, then $\Gamma$ is also satisfiable in $X$.}
\item[(m2)] For any $\Gamma\subseteq\mathcal{L}(\Box)$ and $s\in S$, if for each $\Sigma\subseteq_{fin}\Gamma$, there exists $X\in N(s)$ such that $X\Vdash\Sigma$, then there exists $Y\in N(s)$ such that $Y\Vdash\Gamma$.
\end{enumerate}
\end{definition}

It is shown that the class of m-saturated models is a Hennessy-Milner class, that is, for any monotonic models $\M, \M'$, and for any states $s$ in $\M$ and $s'$ in $\M'$, if $(\M,s)\equiv_{\mathcal{L}(\Box)}(\M',s')$, then $(\M,s)\bis_m(\M',s')$. The notion of m-bisimilarity $\bis_m$ can be found in~\cite[Def.~4.10]{hansen2003monotonic}.

\begin{proposition}
Let $\M$ be monotonic. Then $ue(\M)$ is m-saturated.
\end{proposition}

\weg{\begin{itemize}
\item $S=\{\Sigma\mid \Sigma\text{ is a maximal consistent set}\}$.
\item $N_+(\Sigma)=\{X\mid Y\subseteq X\text{ for some }Y\in N(\Sigma)\}$, where $N(\Sigma)$ is defined as in the canonical neighborhood model for ${\bf E}$, that is, $N(\Sigma)=\{|\phi|\mid \Box\phi\in \Sigma\}$, where $|\phi|=\{\Sigma\in S\mid \phi\in\Sigma\}$.
\item For each $p\in\BP$ and $\Sigma\in S$, $\Sigma\in V(p)$ iff $p\in\Sigma$.
\end{itemize}

\begin{definition}[Ultrafilter extensions] Let $\M=\lr{S,N,V}$ be a monotonic neighborhood model. The triple $ue(\M)=\lr{Uf(S),N^{ue}_+,V^{ue}}$ is the {\em ultrafilter extension} of $\M$, if
\begin{itemize}
\item $Uf(S)=\{s\mid s\text{ is an ultrafilter over }S\}$
\item $N^{ue}_+(s)=\{X\mid Y\subseteq X\text{ for some }Y\in N^{ue}(s)\}$, where $N^{ue}(s)$ is defined as in Def.~\ref{def.ultraexten-classicalmodal}, that is, $N^{ue}(s)=\{\widehat{X}\mid m_N(X)\in s\}$, where $\widehat{X}=\{s\in Uf(S)\mid X\in s\}$.
\item For each $p\in \BP$ and $s\in Uf(S)$, $s\in V^{ue}(p)$ iff $V(p)\in s$.
\end{itemize}
\end{definition}

It should be clear that $N^{ue}_+$ is closed under supersets, thus $ue(\M)$ is monotonic.

Note that even when $\M$ is monotonic, in general we do {\em not} have that $ue(\M)$ is monotonic, since $U\in N^{ue}(s)$ and $U\subseteq W$ but $W$ may not be the form $\widehat{Y}$ for some $Y$. That is why we need to define $N^{ue}_+$ like this, which is guaranteed to be monotonic. However, when $W$ is of the form $\widehat{Y}$, then $ue(\M)$ is monotonic, as we shall show.

\begin{fact}\label{fact.monotonic}
If $X\subseteq Y$ and $N$ is monotonic, then $m_N(X)\subseteq m_N(Y)$.
\end{fact}

\begin{proof}
Suppose $x\in m_N(X)$, then $X\in N(s)$. By $X\subseteq Y$ and $N(s)$ is closed under supersets, $Y\in N(s)$, and then $s\in m_N(Y)$.
\end{proof}

\begin{proposition}\label{prop.almost-monotonic}
Let $\M=\lr{S,N,V}$ be monotonic and $s$ an ultrafilter over $S$. If $U\subseteq \widehat{Y}$ for some $Y$ and $U\in N^{ue}(s)$, then $\widehat{Y}\in N^{ue}(s)$.
\end{proposition}

\begin{proof}
Suppose that $U\in N^{ue}(s)$ and $U\subseteq \widehat{Y}$. According to the definition of $N^{ue}$, $U=\widehat{X}$ for some $X$, thus $m_N(X)\in s$ and $\widehat{X}\subseteq \widehat{Y}$.

We first show that $X\subseteq Y$. For each $w\in S$, if $w\in X$, then by definition of $\pi_w$, $X\in \pi_w$. Since $\pi_w\in Uf(S)$, $\pi_w\in \widehat{X}$, and then $\pi_w\in\widehat{Y}$, i.e., $Y\in \pi_w$. Using the definition of $\pi_w$, we obtain $w\in Y$.

By $X\subseteq Y$ and Fact~\ref{fact.monotonic}, we have $m_N(X)\subseteq m_N(Y)$. Since $s$ is closed under supersets and $m_N(X)\in s$, we infer that $m_N(Y)\in s$. Using the definition of $N^{ue}$, we conclude that $\widehat{Y}\in N^{ue}(s)$.
\end{proof}

\begin{proposition}\label{prop.ultrafilter-delta2}
For all formulas $\phi\in\mathcal{L}(\Box)$ and all ultrafilters $s$ over $S$, we have $V(\phi)\in s$ iff $ue(\M),s\Vdash\phi$. As a corollary, for all $w\in S$, we have $(\M,w)\equiv_{\mathcal{L}(\Box)}(ue(\M),\pi_w)$.
\end{proposition}

\begin{proof}
By induction on $\phi\in\mathcal{L}(\Box)$. The nontrivial case is $\Box\phi$.

First, we have $(\ast)$: $\widehat{V(\phi)}=V^{ue}(\phi)$, this is because for all $s\in Uf(S)$, $s\in \widehat{V(\phi)}$ iff $V(\phi)\in s$ iff (by IH) $ue(\M),s\Vdash\phi$ iff $s\in V^{ue}(\phi)$.

We now have the following equivalences:
\[
\begin{array}{ll}
&V(\Box\phi)\in s\\
\iff &m(V(\phi))\in s\\
\iff &\widehat{V(\phi)}\in N^{ue}(s)\\
\iff &Y\subseteq\widehat{V(\phi)} \text{ for some }Y\in N^{ue}(s)\\
\iff &\widehat{V(\phi)}\in N^{ue}_+(s)\\
\iff &V^{ue}(\phi)\in N^{ue}_+(s)\\
\iff & ue(\M),s\Vdash\Box\phi\\
\end{array}
\]
Where the first equivalence is due to the fact that $m(V(\phi))=V(\Box\phi)$, the second is due to the definition of $N^{ue}$, the third is due to Prop.~\ref{prop.almost-monotonic}, the fourth is due to the definition of $N^{ue}_+$, the fifth is due to $(\ast)$, the last is due to the semantics.
\end{proof}

The following definition could be found in~\cite[Def.~4.32]{hansen2003monotonic}, which is due to Pauly~\cite{Pauly:1999}.
\begin{definition}[m-saturation] Let $\M=\lr{S,N,V}$ be monotonic. $\M$ is said to be {\em m-saturated} if the following conditions are satisfied.
\begin{enumerate}
\item[(m1)] For any $\Gamma\subseteq\mathcal{L}(\Box)$ and $X\subseteq S$, if $\Gamma$ is finitely satisfiable in $X$, then $\Gamma$ is also satisfiable in $X$.
\weg{\item[(m1)] For any $\Gamma\subseteq\mathcal{L}(\Box)$, $s\in S$ and $X\subseteq S$ such that $X\in N(s)$, if $\Gamma$ is finitely satisfiable in $X$, then $\Gamma$ is also satisfiable in $X$.
\item[(m1')] For any $\Gamma\subseteq\mathcal{L}(\Box)$, $s\in S$ and $X,Y\subseteq S$ such that $X\in N(s)$ and $X\subseteq Y$, if $\Gamma$ is finitely satisfiable in $X$, then $\Gamma$ is also satisfiable in $X$.}
\item[(m2)] For any $\Gamma\subseteq\mathcal{L}(\Box)$ and $s\in S$, if for each $\Sigma\subseteq_{fin}\Gamma$, there exists $X\in N(s)$ such that $X\Vdash\Sigma$, then there exists $Y\in N(s)$ such that $Y\Vdash\Gamma$.
\end{enumerate}
\end{definition}

It is shown that the class of m-saturated models is a Hennessy-Milner class, that is, for any monotonic models $\M, \M'$, and for any states $s$ in $\M$ and $s'$ in $\M'$, if $(\M,s)\equiv_{\mathcal{L}(\Box)}(\M',s')$, then $(\M,s)\bis_m(\M',s')$. The notion of m-bisimilarity $\bis_m$ can be found in~\cite[Def.~4.10]{hansen2003monotonic}.

\begin{proposition}
Let $\M$ be monotonic. Then $ue(\M)$ is m-saturated.
\end{proposition}}}

\section{Applications: ultrafilter extension in contingency logic}
\subsection{Kripke contingency logics}\label{sec.kripkecontingency}

In this section, we construct the ultrafilter extension out of a Kripke model in contingency logic. Since in the literature of contingency logic, there are mainly two ways of defining the canonical model, this leads to two notions of ultrafilter extensions. 


The language of contingency logic is denoted $\mathcal{L}(\nabla)$, where $\nabla$ is read `it is contingent that', and the non-contingency operator $\Delta$ is abbreviated as $\neg\nabla$. Semantically, given a Kripke model $\M=\lr{S,R,V}$ and a state $s\in S$:
\[
\begin{array}{lll}
\M,s\vDash\nabla\phi&\iff&\text{there are }t,u\in S\text{ such that }Rst,Rsu\text{ and }\M,t\vDash\phi\text{ and }\M,u\nvDash\phi.\\
\end{array}
\]

And consequently, 
\[
\begin{array}{lll}
\M,s\vDash\Delta\phi&\iff&\text{for all }t,u\in S,\text{ if }Rst,Rsu,\text{ then }(\M,t\vDash\phi\iff\M,u\vDash\phi).\\
\end{array}
\]

A Kripke model $\M$ is said to be {\em $\mathcal{L}(\nabla)$-saturated}, if for any $s$ in $\M$, for any $\Gamma\subseteq\mathcal{L}(\nabla)$, if $\Gamma$ is finitely satisfiable in the set of successors of $s$, then $\Gamma$ is satisfiable in the set of successors of $s$.

Similar to the case of normal modal logics, we here need some requisite notation.

\begin{definition}\label{def.m-Delta} Given any nonempty set $S$,
\[
\begin{array}{lcl}
m_\nabla(X)&=&\{s\in S\mid Rst,Rsu\text{ for some }t\in X\text{ and some }u\notin X\}\\
m_\Delta(X)&=&\{s\in S\mid \text{for all }t,u,\text{ if }Rst\text{ and }Rsu, \text{ then }t\in X\text{ iff }u\in X\}.\\
\end{array}
\]
\end{definition}

It is straightforward to verify that $m_\nabla(V(\phi))=V(\nabla\phi)$ and $m_\Delta(V(\phi))=V(\Delta\phi)$. The definition of $m_\nabla$ is very natural, in that just as ``$Rst,Rsu\text{ for some }t\in X\text{ and some }\\
u\notin X$'' corresponds to the Kripke semantics of $\nabla$, $s\in m_\nabla(X)$ corresponds to the Kripke semantics of $\nabla$. The naturalness of $m_\Delta$ is similar. In fact, the definitions of $m_\nabla$ and $m_\Delta$ can be followed from the semantics of $\nabla\phi$ and $\Delta\phi$, by generalizing $V(\phi)$ to an arbitrary set $X$. Also, $m_\nabla(X)=m_\nabla(\overline{X})$, $m_\Delta(X)=m_\Delta(\overline{X})$ and $m_\Delta(X)=\overline{m_\nabla(X)}$. Moreover,


\begin{proposition}\label{prop.subseteq} For all $X,Y$, we have
\begin{enumerate}
\item[(a)] $m_\Delta(X)\cap m_\Delta(Y)\subseteq m_\Delta(X\cap Y)$.
\item[(b)] $m_\nabla(X\cap Y)\cap m_\nabla(X'\cap \overline{Y})\subseteq m_\nabla(Y)$.
\end{enumerate}
\end{proposition}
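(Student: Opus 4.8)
The plan is to unfold both inclusions directly from Definition~\ref{def.m-Delta}, reasoning pointwise about the successors of a fixed state.

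For (a), I would fix an arbitrary $s \in m_\Delta(X) \cap m_\Delta(Y)$ and verify $s \in m_\Delta(X \cap Y)$. Given any $t, u$ with $Rst$ and $Rsu$, membership of $s$ in $m_\Delta(X)$ yields $t \in X \iff u \in X$, and membership in $m_\Delta(Y)$ yields $t \in Y \iff u \in Y$; conjoining the two biconditionals gives $t \in X \cap Y \iff u \in X \cap Y$, which is exactly what is required. This part is just propositional bookkeeping.

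For (b), I would fix $s \in m_\nabla(X \cap Y) \cap m_\nabla(X' \cap \overline{Y})$ and exhibit the two witnesses needed for $s \in m_\nabla(Y)$. From $s \in m_\nabla(X \cap Y)$ I take the successor lying in $X \cap Y$ --- call it $t$ --- and keep only the fact $t \in Y$; from $s \in m_\nabla(X' \cap \overline{Y})$ I take the successor lying in $X' \cap \overline{Y}$ --- call it $u$ --- and keep only the fact $u \in \overline{Y}$, i.e.\ $u \notin Y$. Then $Rst$, $Rsu$, $t \in Y$ and $u \notin Y$, so $s \in m_\nabla(Y)$ by definition.

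Both arguments are short, so there is no genuine obstacle; the only point worth making explicit is the asymmetry in (b): from each $m_\nabla$-witness pair we retain the witness on the side that sits inside $Y$ (respectively outside $Y$), using nothing beyond the inclusions $X \cap Y \subseteq Y$ and $X' \cap \overline{Y} \subseteq \overline{Y}$. In particular the set $X'$ is inert --- it only makes the statement look more general --- and the second, "negative" witness of each pair is never used.
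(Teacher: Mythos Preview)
Your proposal is correct and essentially mirrors the paper's proof: part (a) is dismissed there as ``direct from Def.~\ref{def.m-Delta}'', and for part (b) the paper likewise picks out from each $m_\nabla$-hypothesis the positive witness (the successor lying in $X\cap Y$, respectively in $X'\cap\overline{Y}$) and observes that these two successors separate $Y$. Your remark that the negative witnesses are never used and that $X'$ is inert is exactly the content of the paper's ``Now focus on $t$ and $t'$''.
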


\begin{proof} Item (a) is direct from Def.~\ref{def.m-Delta}.

For item (b), let $s\in m_\nabla(X\cap Y)$ and $s\in m_\nabla(X'\cap \overline{Y})$. Then there are $t,u\in S$ such that $sRt$ and $sRu$ and $t\in X\cap Y$ and $u\notin X\cap Y$, and there are $t',u'\in S$ such that $sRt'$ and $sRu'$ and $t'\in X'\cap \overline{Y}$ and $u'\notin X'\cap \overline{Y}$. Now focus on $t$ and $t'$: $t\in Y$ and $t'\notin Y$. Therefore, $s\in m_\nabla(Y)$.
\end{proof}

\weg{The following is direct from Def.~\ref{def.m-Delta}.
\begin{proposition}\label{prop.intersection}
For all $X,Y$, $m_\Delta(X)\cap m_\Delta(Y)\subseteq m_\Delta(X\cap Y)$.
\end{proposition}}

\subsubsection{$\exists\forall$-version}

One version is inspired by the canonical model proposed in~\cite[Def.~5.6]{Fanetal:2014}, which is in turn inspired by an `almost-definability' schema $\nabla\chi\to(\Box\phi\lra\Delta\phi\land\Delta(\chi\to\phi))$~\cite[Def.~2.4]{Fanetal:2014}, see also~\cite[Prop.~3.5, Def.~4.5]{Fanetal:2015} for the multi-modal case.

\weg{Recall that the canonical model $\M=\lr{S,R,V}$ for minimal contingency logic in~\cite{Fanetal:2014} is defined as follows:}

Recall that the canonical model $\M=\lr{S,R,V}$ for minimal contingency logic in~\cite[Def.~5.6]{Fanetal:2014} (see also~\cite[Def.~4.5]{Fanetal:2015} for the multi-modal case) is defined as follows:
\begin{itemize}
\item $S=\{\Sigma\mid \Sigma\text{ is a maximal consistent set}\}$.
\item $R\Sigma\Gamma$ iff
      \begin{itemize}
      \item there exists $\chi$ such that $\nabla\chi\in \Sigma$, and
      \item for all $\phi$, if $\Delta\phi\land\Delta(\chi\to\phi)\in \Sigma$, then $\phi\in \Gamma$.
      \end{itemize}
\item For each $p\in\BP$ and $\Sigma\in S$, $\Sigma\in V(p)$ iff $p\in \Sigma$.
\end{itemize}

The following result is shown in~\cite[Lemma~5.7]{Fanetal:2014} (also see~\cite[Lemma~4.6]{Fanetal:2015} for the multi-modal case).
\begin{proposition}
Let $\Sigma\in S$. Then for all $\phi\in\mathcal{L}(\nabla)$, for each $p\in\BP\cup\{\nabla\phi\}$, we have
$$p\in\Sigma\iff \Sigma\in V(p).$$
\end{proposition}

Applying Prop.~\ref{prop.general}, we get the following result.
\begin{proposition}
Let $\Sigma\in S$. Then for all $\phi\in\mathcal{L}(\nabla)$, we have
$$\phi\in\Sigma\iff V(\phi)\in \pi_\Sigma.$$
\end{proposition}

\weg{Similar to the case of normal modal logics, we here need some requisite notation.

\begin{definition}\label{def.m-Delta}\ 
\[
\begin{array}{lcl}
m_\nabla(X)&=&\{s\in S\mid Rst,Rsu\text{ for some }t\in X\text{ and some }u\notin X\}\\
m_\Delta(X)&=&\{s\in S\mid \text{for all }t,u,\text{ if }Rst\text{ and }Rsu, \text{ then }t\in X\text{ iff }u\in X\}.\\
\end{array}
\]
\end{definition}

It is straightforward to verify that $m_\nabla(V(\phi))=V(\nabla\phi)$ and $m_\Delta(V(\phi))=V(\Delta\phi)$. The definition of $m_\nabla$ is very natural, in that just as ``$Rst,Rsu\text{ for some }t\in X\text{ and some }\\
u\notin X$'' corresponds to the Kripke semantics of $\nabla$, $s\in m_\nabla(X)$ corresponds to the Kripke semantics of $\nabla$. The naturalness of $m_\Delta$ is similar. In fact, the definitions of $m_\nabla$ and $m_\Delta$ can be followed from the semantics of $\nabla\phi$ and $\Delta\phi$, by generalizing $V(\phi)$ to an arbitrary set $X$. Also, $m_\nabla(X)=m_\nabla(\overline{X})$, $m_\Delta(X)=m_\Delta(\overline{X})$ and $m_\Delta(X)=\overline{m_\nabla(X)}$.}

\weg{\begin{proposition} For all $X,Y$,
\begin{enumerate}
\item[(i)] $m_\nabla(X)\cap m_\Delta(Y)\cap m_\Delta(\overline{X}\cup Y)\subseteq m_\Box(Y)$
\item[(ii)] $m_\Box(X)\subseteq m_\Delta(X\cup Y)$
\end{enumerate}
\end{proposition}

\begin{proof}
For (i), suppose that $s\in m_\nabla(X)\cap m_\Delta(Y)\cap m_\Delta(\overline{X}\cup Y)$ but $s\notin m_\Box(Y)$. Then there is a $x$ such that $sRx$ and $x\notin Y$. From $s\in m_\nabla(X)$ it follows that there are $t,u$ with $sRt$, $sRu$, $t\in X$ and $u\notin X$, and then $u\in \overline{X}\cup Y$. Since $s\in m_\Delta(\overline{X}\cup Y)$, we can show that $t\in \overline{X}\cup Y$, and then $t\in Y$. This contradicts with the fact that $s\in m_\Delta(Y)$ and $sRt$, $sRx$ and $x\notin Y$.

(ii) is straightforward by definitions of $m_\Box$ and $m_\Delta$.
\end{proof}

As a corollary, we have the following important result:
\begin{corollary}
If $s\in m_\nabla(X)$, then
$$s\in m_\Delta(Y)\cap m_\Delta(\overline{X}\cup Y)\Longleftrightarrow s\in m_\Box(Y).$$
\end{corollary}

\begin{proposition}\label{prop.subseteq}
$m_\nabla(X\cap Y)\cap m_\nabla(X'\cap \overline{Y})\subseteq m_\nabla(Y)$.
\end{proposition}

\begin{proof}
Let $s\in m_\nabla(X\cap Y)$ and $s\in m_\nabla(X'\cap \overline{Y})$. Then there are $t,u\in S$ such that $sRt$ and $sRu$ and $t\in X\cap Y$ and $u\notin X\cap Y$, and there are $t',u'\in S$ such that $sRt'$ and $sRu'$ and $t'\in X'\cap \overline{Y}$ and $u'\notin X'\cap \overline{Y}$. Now focus on $t$ and $t'$: $t\in Y$ and $t'\notin Y$. Therefore, $s\in m_\nabla(Y)$.
\end{proof}

If we think of $m_\nabla$ as an algebra operator, then the above subset relation is an algebraic correspondent of the Kripke validity $\nabla(\phi\land \psi)\land\nabla(\chi\land\neg\psi)\to \nabla\psi$.

\begin{proposition}\label{prop.usef}
For all $A,X,Y,Z$, if $Y\subseteq Z$, then$$m_\Delta(Y)\cap m_\Delta(\overline{X}\cup Y)\cap m_\nabla(X)\subseteq m_\Delta(Z\cup A).$$
\end{proposition}

\begin{proof}
Let $Y\subseteq Z$. Suppose that $s\in m_\Delta(Y)\cap m_\Delta(\overline{X}\cup Y)\cap m_\nabla(X)$ but $s\notin m_\Delta(Z\cup A)$. By $s\notin m_\Delta(Z\cup A)$, there are $t,u$ such that $sRt$, $sRu$ and $t\in Z\cup A$ and $u\notin Z\cup A$. Then $u\notin Z$, and thus $u\notin Y$. By $s\in m_\nabla(X)$, there are $x,y$ such that $sRx$, $sRy$ and $x\in X$ and $y\notin X$, i.e $y\in \overline{X}$, and then $y\in \overline{X}\cup Y$. Since $s\in m_\Delta(\overline{X}\cup Y)$, we have $x\in \overline{X}\cup Y$, thus $x\in Y$. We have now arrived at a contradiction: on one hand, $s\in m_\Delta(Y)$; on the other hand, $sRu$, $sRx$ and $u\notin Y$ and $x\in Y$.
\end{proof}}





Now with the method in Sec.~\ref{sec.normalmodal} in hand, we can obtain the ultrafilter extension in Kripke contingency logics. We call it `$\exists\forall$-version' since the accessibility relation $R^{ue}$ has the $\exists\forall$-form.

\begin{definition}[Ultrafilter Extension] Let $\M=\lr{S,R,V}$ be a model. The triple $ue(\M)=\lr{Uf(S),R^{ue},V^{ue}}$ is the {\em ultrafilter extension} of $\M$, if
\begin{itemize}
\item $Uf(S)=\{s\mid s\text{ is an ultrafilter over }S\}$.
\item $R^{ue}st$ iff there exists $X$ such that $m_\nabla(X)\in s$, and for all $Y$, if $m_\Delta(Y)\cap m_\Delta(\overline{X}\cup Y)\in s$, then $Y\in t$.
\item $V^{ue}(p)=\{s\in Uf(S)\mid V(p)\in s\}$.
\end{itemize}
\end{definition}

In what follows, we verify that the notion of ultrafilter extension constructed above is a desired one in Kripke contingency logics. That is, ultrafilter extension and the original model are $\mathcal{L}(\nabla)$-equivalent; moreover, $\mathcal{L}(\nabla)$-equivalence can be characterized as $\Delta$-bisimilarity-somewhere-else --- namely, between ultrafilter extensions.

\begin{proposition}\label{prop.ultrafilter1}
For all formulas $\phi\in\mathcal{L}(\nabla)$ and all ultrafilters $s$ over $S$, we have $V(\phi)\in s$ iff $ue(\M),s\vDash\phi$. As a corollary, for all $w\in S$, we have $(\M,w)\equiv_{\mathcal{L}(\nabla)}(ue(\M),\pi_w)$.
\end{proposition}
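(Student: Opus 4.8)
The plan is to prove the first statement --- $V(\phi)\in s$ iff $ue(\M),s\vDash\phi$ for all $\phi\in\mathcal{L}(\nabla)$ and all ultrafilters $s$ over $S$ --- by induction on the structure of $\phi$, and then read off the corollary by instantiating $s=\pi_w$ and using Prop.~\ref{prop.box}'s analogue for this setting (i.e.\ the proposition just established that $\phi\in\Sigma$ iff $V(\phi)\in\pi_\Sigma$), together with the fact that $V(\phi)\in\pi_w$ iff $w\in V(\phi)$ iff $\M,w\vDash\phi$. The base case ($\phi=p$) is immediate from the definition of $V^{ue}$, and the Boolean cases ($\neg$, $\land$) use clauses (ii), (iii), (v) of Def.~\ref{def.ultrafilter} exactly as in the proof of Theorem~\ref{prop.general}. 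So the entire content is in the case $\phi=\nabla\psi$.

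For the modal case I would first record the induction hypothesis in the form $(\ast)$: $\widehat{V(\psi)}=V^{ue}(\psi)$, i.e.\ for every ultrafilter $s$, $V(\psi)\in s$ iff $ue(\M),s\vDash\psi$; this is just a restatement of IH via the definition of $\widehat{\cdot}$. Then I must show
\[
V(\nabla\psi)\in s \iff ue(\M),s\vDash\nabla\psi,
\]
where, unwinding the Kripke semantics of $\nabla$ in $ue(\M)$, the right-hand side says: there exist ultrafilters $t,u$ with $R^{ue}st$, $R^{ue}su$, $t\in V^{ue}(\psi)$ and $u\notin V^{ue}(\psi)$ --- equivalently (by $(\ast)$) $V(\psi)\in t$ and $V(\psi)\notin u$. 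Using $m_\nabla(V(\psi))=V(\nabla\psi)$, the left-hand side becomes $m_\nabla(V(\psi))\in s$. So the real goal is
\[
m_\nabla(V(\psi))\in s \iff \exists t,u\in Uf(S)\colon R^{ue}st,\ R^{ue}su,\ V(\psi)\in t,\ V(\psi)\notin u.
\]

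The right-to-left direction should be the easier half: given such $t,u$, from $R^{ue}st$ we get some $X$ with $m_\nabla(X)\in s$ such that $m_\Delta(Y)\cap m_\Delta(\overline X\cup Y)\in s$ implies $Y\in t$ for all $Y$; I would try to argue that $m_\nabla(V(\psi))\in s$ by combining the ``witness'' sets coming from $t$ (which contains $V(\psi)$) and from $u$ (which contains $\overline{V(\psi)}$), invoking Prop.~\ref{prop.subseteq}(b) --- this is precisely the algebraic correspondent of the validity $\nabla(X\land Y)\land\nabla(X'\land\neg Y)\to\nabla Y$ that the remark after that proposition flags. The left-to-right direction is where I expect the main obstacle: assuming $m_\nabla(V(\psi))\in s$, I must manufacture two ultrafilters $t\ni V(\psi)$ and $u\ni\overline{V(\psi)}$ that are both $R^{ue}$-successors of $s$. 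The natural move is to take $X:=V(\psi)$ as the existential witness and consider the families $\{V(\psi)\}\cup\{Y\mid m_\Delta(Y)\cap m_\Delta(\overline X\cup Y)\in s\}$ and $\{\overline{V(\psi)}\}\cup\{Y\mid m_\Delta(Y)\cap m_\Delta(\overline X\cup Y)\in s\}$, show each has the finite intersection property, and extend to ultrafilters via the Ultrafilter Theorem; the FIP check will require showing that finitely many such $Y_1,\dots,Y_n$ together with $V(\psi)$ (resp.\ $\overline{V(\psi)}$) have nonempty intersection, which should reduce --- using Prop.~\ref{prop.subseteq}(a) to collapse $m_\Delta(Y_1)\cap\cdots\cap m_\Delta(Y_n)$ down toward $m_\Delta(\bigcap Y_i)$, clause (iv) of Def.~\ref{def.ultrafilter} ($\emptyset\notin s$), and the hypothesis $m_\nabla(V(\psi))\in s$ --- to an argument that $\bigcap Y_i\cap V(\psi)=\emptyset$ would force $m_\nabla(V(\psi))\notin s$, a contradiction. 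Getting the bookkeeping of these $m_\Delta$/$m_\nabla$ interactions exactly right (and handling the $\overline X\cup Y$ conjunct, perhaps via Prop.~\ref{prop.usef}-style monotonicity if needed) is the delicate part; the rest is routine.
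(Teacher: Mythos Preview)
Your plan is correct and follows essentially the same route as the paper: induction on $\phi$, with the $\nabla$-case split into the two directions you describe, using $X:=V(\psi)$ as the existential witness, Prop.~\ref{prop.subseteq}(a) to close $\Omega=\{Y\mid m_\Delta(Y)\cap m_\Delta(\overline{V(\psi)}\cup Y)\in s\}$ under intersection, the Ultrafilter Theorem to produce $t,u$, and Prop.~\ref{prop.subseteq}(b) for the converse. Two small remarks: (i) for the right-to-left direction the paper argues by contradiction (assume $m_\nabla(V(\psi))\notin s$, extract $m_\nabla(X\cap V(\psi)),\,m_\nabla(X'\cap\overline{V(\psi)})\in s$ from the two $R^{ue}$-witnesses, then apply Prop.~\ref{prop.subseteq}(b)) --- your sketch has the right ingredients but you will need this indirect structure; (ii) for the FIP check the paper does a direct element-chase inside $m_\Delta(Y)\cap m_\Delta(\overline{V(\psi)}\cup Y)\cap V(\nabla\psi)\neq\emptyset$ rather than your proposed contradiction, but your idea also goes through (if $Y\cap V(\psi)=\emptyset$ then $\overline{V(\psi)}\cup Y=\overline{V(\psi)}$, forcing $m_\Delta(V(\psi))\in s$; the $Y\cap\overline{V(\psi)}=\emptyset$ case needs one more line). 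The reference to ``Prop.~\ref{prop.usef}'' is to a deleted statement and is not needed.
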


\begin{proof}
By induction on $\phi\in\mathcal{L}(\nabla)$. The only nontrivial case is $\nabla\phi$.

First suppose $ue(\M),s\vDash\nabla\phi$. Then there are $t,u\in Uf(S)$ such that $R^{ue}st$ and $R^{ue}su$ and $ue(\M),t\vDash\phi$ and $ue(\M),u\nvDash\phi$. By induction hypothesis, $V(\phi)\in t$ and $V(\phi)\notin u$. From $R^{ue}st$ it follows that there exists $X$ such that $m_\nabla(X)\in s$ and, for all $Y$, if $m_\Delta(Y)\cap m_\Delta(\overline{X}\cup Y)\in s$, then $Y\in t$. Since $V(\phi)\in t$ and $t$ is an ultrafilter, $\overline{V(\phi)}\notin t$. Then $m_\Delta(\overline{V(\phi)})\cap m_\Delta(\overline{X}\cup \overline{V(\phi)})\notin s$. Similarly, from $R^{ue}su$ we can obtain that there exists $X'$ such that $m_\Delta(V(\phi))\cap m_\Delta(\overline{X'}\cup V(\phi))\notin s$. We claim that $m_\nabla(V(\phi))\in s$: if not, then $m_\Delta(V(\phi))\in s$. Since $m_\Delta(V(\phi))=m_\Delta(\overline{V(\phi)})$, by the fact that ultrafilters are closed under intersection, we have $m_\Delta(\overline{X}\cup \overline{V(\phi)})\notin s$ and $m_\Delta(\overline{X'}\cup V(\phi))\notin s$. This means that $m_\nabla(X\cap V(\phi))\in s$ and $m_\nabla(X'\cap \overline{V(\phi)})\in s$. Again, by the fact that ultrafilters are closed under intersection, we have $m_\nabla(X\cap V(\phi))\cap m_\nabla(X'\cap \overline{V(\phi)})\in s$. Then using Prop.~\ref{prop.subseteq}(b) and the fact that ultrafilters are closed under supersets, we infer that $m_\nabla(V(\phi))\in s$: an contradiction. Therefore, $m_\nabla(V(\phi))\in s$, viz. $V(\nabla\phi)\in s$.

Now assume $V(\nabla\phi)\in s$, to show $ue(\M),s\vDash\nabla\phi$. By induction hypothesis, this means that we need to find two states $t,u$ in $Uf(S)$ such that $R^{ue}st$ and $R^{ue}su$ and $V(\phi)\in t$ and $V(\phi)\notin u$. By assumption and $m_\nabla(V(\phi))=V(\nabla\phi)$, we have $m_\nabla(V(\phi))\in s$.

Define $\Omega=\{Y\mid m_\Delta(Y)\cap m_\Delta(\overline{V(\phi)}\cup Y)\in s\}$. We first show that $\Omega$ is closed under intersection. Let $Y,Z\in\Omega$. Then $m_\Delta(Y)\cap m_\Delta(\overline{V(\phi)}\cup Y)\in s$ and $m_\Delta(Z)\cap m_\Delta(\overline{V(\phi)}\cup Z)\in s$. By the fact that ultrafilters are closed under supersets, we have $m_\Delta(Y), m_\Delta(\overline{V(\phi)}\cup Y)\in s$ and $m_\Delta(Z), m_\Delta(\overline{V(\phi)}\cup Z)\in s$. Then by the fact that ultrafilters are closed under intersection, $m_\Delta(Y)\cap m_\Delta(Z)\in s$ and $m_\Delta(\overline{V(\phi)}\cup Y)\cap m_\Delta(\overline{V(\phi)}\cup Z)\in s$. Again, by Prop.~\ref{prop.subseteq}(a) and the fact that ultrafilters are closed under supersets, $m_\Delta(Y\cap Z)\in s$ and $m_\Delta(\overline{V(\phi)}\cup (Y\cap Z))\in s$. Therefore $Y\cap Z\in \Omega$.

We now show that: for any $Y\in\Omega$, $Y\cap V(\phi)\neq \emptyset$ and $Y\cap \overline{V(\phi)}\neq \emptyset$.

Let $Y\in \Omega$, then by definition of $\Omega$, $m_\Delta(Y)\cap m_\Delta(\overline{V(\phi)}\cup Y)\in s$. By assumption and the fact that ultrafilters are closed under intersection, $m_\Delta(Y)\cap m_\Delta(\overline{V(\phi)}\cup Y)\cap V(\nabla\phi)\in s$. Since $s$ does not contain the empty set, we obtain $m_\Delta(Y)\cap m_\Delta(\overline{V(\phi)}\cup Y)\cap V(\nabla\phi)\neq \emptyset$. Then there is an element in $m_\Delta(Y)\cap m_\Delta(\overline{V(\phi)}\cup Y)\cap V(\nabla\phi)$, say $x$. From $x\in V(\nabla\phi)$, it follows that there exist $y,z\in R(x)$ with $y\in V(\phi)$ and $z\notin V(\phi)$, and then $z\in \overline{V(\phi)}\cup Y$. Then by $x\in m_\Delta(\overline{V(\phi)}\cup Y)$, we infer $y\in \overline{V(\phi)}\cup Y$, and thus $y\in Y$. Therefore $Y\cap V(\phi)\neq \emptyset$.\weg{ Moreover, since $m_\nabla(V(\phi))=V(\nabla\phi)$ and $Y\subseteq Y$, by Prop.~\ref{prop.usef}, we can infer that $x\in m_\Delta(V(\phi)\cup Y)$. From $y\in V(\phi)\cup Y$, it follows that $z\in V(\phi)\cup Y$. But $z\notin V(\phi)$, thus $z\in Y$,} Since $x\in m_\Delta(Y)$ and $xRy,xRz$, we infer $z\in Y$, and hence $z\in Y\cap \overline{V(\phi)}$, therefore $Y\cap \overline{V(\phi)}\neq \emptyset$.

This indicates that $\Omega\cup \{V(\phi)\}$ and $\Omega\cup \{\overline{V(\phi)}\}$ both have the finite intersection property. Using the Ultrafilter Theorem, there are $t,u\in Uf(S)$ such that $\Omega\cup\{V(\phi)\}\subseteq t$ and $\Omega\cup\{\overline{V(\phi)}\}\subseteq u$. Then by definition of $R^{ue}$, we conclude that $R^{ue}st$ and $R^{ue}su$ and $V(\phi)\in t$ and $\overline{V(\phi)}\in u$ (thus $V(\phi)\notin u$).

Therefore, for all $w\in S$, for all $\phi\in \mathcal{L}(\nabla)$, we have $\M,w\vDash\phi$ iff $w\in V(\phi)$ iff $V(\phi)\in \pi_w$ iff $ue(\M),\pi_w\vDash\phi$.
\end{proof}

\begin{proposition}
Let $\M$ be a model. Then $ue(\M)$ is $\mathcal{L}(\nabla)$-saturated.
\end{proposition}

\begin{proof}
Given any set $\Gamma\subseteq \mathcal{L}(\nabla)$, and any ultrafilter $s$ over $S$, we show that if $\Gamma$ is finitely satisfiable in the set of successors of $s$, then $\Gamma$ is satisfiable in the set of successors of $s$. W.l.o.g. we may assume that $R^{ue}(s)\neq \emptyset$, since otherwise the statement holds vacuously.

By definition of $R^{ue}$, $m_\nabla(X)\in s$ for some $X$. Suppose that $\Gamma$ is finitely satisfiable in the set of successors of $s$, to show that $\Gamma$ is satisfiable in the set of successors of $s$. For this, we define the following set
$$\Theta=\{V(\phi)\mid \phi\in\Gamma'\}\cup\{Y\mid m_\Delta(Y)\cap m_\Delta(\overline{X}\cup Y)\in s\},$$
where $\Gamma'$ is the set of finite conjunctions of formulas in $\Gamma$. In the following we will show that $\Theta$ has the finite intersection property.

The proof of Prop.~\ref{prop.ultrafilter1} has shown that $\{Y\mid m_\Delta(Y)\cap m_\Delta(\overline{X}\cup Y)\in s\}$ is closed under intersection. Moreover, $\{V(\phi)\mid \phi\in\Gamma'\}$ is closed under intersection, as shown below. Let $A,B\in \{V(\phi)\mid \phi\in\Gamma'\}$. Then $A=V(\phi)$ and $B=V(\psi)$ for some $\phi,\psi\in\Gamma'$. One may easily verify that $A\cap B=V(\phi\land\psi)$ and $\phi\land\psi\in\Gamma'$, and thus $A\cap B\in \{V(\phi)\mid \phi\in\Gamma'\}$.

Next we show that for any $\phi\in\Gamma'$ and $Y\subseteq S$ for which $m_\Delta(Y)\cap m_\Delta(\overline{X}\cup Y)\in s$, we have $V(\phi)\cap Y\neq \emptyset$. By $\phi\in\Gamma'$ and supposition, there is an ultrafilter $t$ over $S$ such that $R^{ue}st$ and $ue(\M),t\vDash\phi$. By Prop.~\ref{prop.ultrafilter1}, $V(\phi)\in t$; by $R^{ue}st$ and the definition of $R^{ue}$, $Y\in t$. Since $t$ is closed under intersection, we obtain $V(\phi)\cap Y\in t$. Since $t$ does not contain the empty set, $V(\phi)\cap Y\neq \emptyset$.

We have thus shown that $\Theta$ has the finite intersection property. Then by the Ultrafilter Theorem, $\Theta$ can be extended to an ultrafilter $u$. From $m_\nabla(X)\in s$ and $\{Y\mid m_\Delta(Y)\cap m_\Delta(\overline{X}\cup Y)\in s\}\subseteq u$, it follows that $R^{ue}su$; moreover, from $\{V(\phi)\mid \phi\in\Gamma'\}\subseteq u$, it follows that $ue(\M),u\vDash\Gamma$: since for any $\phi\in \Gamma$, $\phi\in\Gamma'$, then $V(\phi)\in u$, applying Prop.~\ref{prop.ultrafilter1} we derive $ue(\M),u\vDash\phi$.
\end{proof}

Since the class of $\mathcal{L}(\nabla)$-saturated models has the Hennessy-Milner property~\cite[Prop.~3.9]{Fanetal:2014}, we obtain the main result of this section:  $\mathcal{L}(\nabla)$-equivalence can be thought of as $\mathcal{L}(\nabla)$-bisimilarity between ultrafilter extensions.

\begin{theorem}
Let $\M$ and $\M'$ be models and $w\in \M$, $w'\in\M'$. Then
$$(\M,w)\equiv_{\mathcal{L}(\nabla)}(\M',w')\Longleftrightarrow(ue(\M),\pi_w)\bis_\Delta(ue(\M'),\pi_{w'}).$$
\end{theorem}

\weg{\begin{proposition}
For all $\phi\in\mathcal{L}(\nabla)$, if $ue(\mathcal{F})\vDash\phi$, then $\mathcal{F}\vDash\phi$.
\end{proposition}

\begin{proof}
Suppose that $\mathcal{F}\nvDash\phi$, then there is a valuation $V$ and a state $w$ such that $\lr{\mathcal{F},V},w\nvDash\phi$. By Prop.~\ref{prop.ultrafilter}, we infer that $\lr{ue(\mathcal{F}),V^{ue}},\pi_w\nvDash\phi$. Therefore, $ue(\mathcal{F})\nvDash\phi$.
\end{proof}}

\subsubsection{$\forall$-version}

Another version is inspired by the canonical model given in~\cite[p.~232]{DBLP:journals/ndjfl/Kuhn95}, which is in fact equal to the canonical model originally given in~\cite{Humberstone95}, as shown in~\cite{Fan:2017b}.

The canonical model $\M=\lr{S,R,V}$ for minimal contingency logic in~\cite{DBLP:journals/ndjfl/Kuhn95} is defined as follows:
\begin{itemize}
\item $S=\{\Sigma\mid \Sigma\text{ is a maximal consistent set}\}$.
\item $R\Sigma\Gamma$ iff for all $\phi$, if for all $\psi$, $\Delta(\psi\vee\phi)\in \Sigma$, then $\phi\in \Gamma$.
\item For each $p\in\BP$ and $\Sigma\in S$, $\Sigma\in V(p)$ iff $p\in \Sigma$.
\end{itemize}

The following result is shown in~\cite[Lemma~2]{DBLP:journals/ndjfl/Kuhn95}.
\begin{proposition}
Let $\Sigma\in S$. Then for all $\phi\in\mathcal{L}(\nabla)$, for each $p\in\BP\cup\{\nabla\phi\}$, we have
$$p\in\Sigma\iff \Sigma\in V(p).$$
\end{proposition}

Prop.~\ref{prop.general} then gives us the following result.
\begin{proposition}
Let $\Sigma\in S$. Then for all $\phi\in\mathcal{L}(\nabla)$, we have
$$\phi\in\Sigma\iff V(\phi)\in \pi_\Sigma.$$
\end{proposition}



The following is direct from Def.~\ref{def.m-Delta}.
\begin{proposition}\label{prop.intersection}
For all $X,Y$, $m_\Delta(X)\cap m_\Delta(Y)\subseteq m_\Delta(X\cap Y)$.
\end{proposition}

Once again, with the method in Sec.~\ref{sec.normalmodal} in hand, we can obtain the ultrafilter extension in Kripke contingency logic. We call it `$\forall$-version', since the accessibility relation $R^{ue}$ has the $\forall$-form.

\begin{definition}[Ultrafilter Extension] Let $\M=\lr{S,R,V}$ be a model. The triple $ue(\M)=\lr{Uf(S),R^{ue},V^{ue}}$ is the {\em ultrafilter extension} of $\M$, if
\begin{itemize}
\item $Uf(S)=\{s\mid s\text{ is an ultrafilter over }S\}$.
\item $R^{ue}st$ iff for all $Y$, if $m_\Delta(X\cup Y)\in s$ for all $X$, then $Y\in t$.
\item $V^{ue}(p)=\{s\in Uf(S)\mid V(p)\in s\}$.
\end{itemize}
\end{definition}

\begin{proposition}\label{prop.ultrafilter2}
For all formulas $\phi\in\mathcal{L}(\nabla)$ and all ultrafilters $s$ over $S$, we have $V(\phi)\in s$ iff $ue(\M),s\vDash\phi$. As a corollary, for all $w\in S$, we have $(\M,w)\equiv_{\mathcal{L}(\nabla)}(ue(\M),\pi_w)$.
\end{proposition}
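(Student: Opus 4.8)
The plan is to mimic the proof of Prop.~\ref{prop.ultrafilter1}, proceeding by induction on $\phi\in\mathcal{L}(\nabla)$ with the atomic, negation and conjunction cases handled exactly as in Prop.~\ref{prop.general} (using $V^{ue}(p)=\{s\mid V(p)\in s\}$ and the fact that ultrafilters satisfy Def.~\ref{def.ultrafilter}(ii),(iii),(v)). The only nontrivial case is $\phi=\nabla\psi$, and there the key identity to exploit is $m_\nabla(V(\psi))=V(\nabla\psi)$ together with $m_\Delta(Z)=\overline{m_\nabla(Z)}$. I would split this case into the two directions of the biconditional $V(\nabla\psi)\in s \iff ue(\M),s\vDash\nabla\psi$.

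For the left-to-right direction, suppose $ue(\M),s\vDash\nabla\psi$. Then there are $t,u\in Uf(S)$ with $R^{ue}st$, $R^{ue}su$ and $ue(\M),t\vDash\psi$, $ue(\M),u\nvDash\psi$, so by the induction hypothesis $V(\psi)\in t$ and $V(\psi)\notin u$, hence $\overline{V(\psi)}\in u$. Since $V(\psi)\notin u$ and $R^{ue}su$ means ``$Y\in u$ whenever $m_\Delta(X\cup Y)\in s$ for all $X$'', contraposition gives some $X_0$ with $m_\Delta(X_0\cup V(\psi))\notin s$; similarly from $\overline{V(\psi)}\notin t$ and $R^{ue}st$ we get some $X_1$ with $m_\Delta(X_1\cup\overline{V(\psi)})\notin s$. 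Now I would argue towards $m_\nabla(V(\psi))\in s$: if not, then $m_\Delta(V(\psi))\in s$ (by Def.~\ref{def.ultrafilter}(v) and $m_\Delta=\overline{m_\nabla}$); since $m_\Delta(V(\psi))=m_\Delta(\overline{V(\psi)})$, and using that the two sets above are not in $s$ so their complements $m_\nabla(X_0\cup V(\psi))=m_\nabla(\overline{X_0}\cap\overline{V(\psi)})$ and $m_\nabla(X_1\cup\overline{V(\psi)})=m_\nabla(\overline{X_1}\cap V(\psi))$ are in $s$, I would intersect these (ultrafilters closed under intersection) and apply Prop.~\ref{prop.subseteq}(b) — with the roles of the sets chosen so that the two ``focus'' witnesses land on opposite sides of $V(\psi)$ — to conclude $m_\nabla(V(\psi))\in s$, contradicting $m_\Delta(V(\psi))\in s$. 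Hence $m_\nabla(V(\psi))\in s$, i.e.\ $V(\nabla\psi)\in s$.

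For the right-to-left direction, assume $V(\nabla\psi)\in s$, i.e.\ $m_\nabla(V(\psi))\in s$; I must produce $t,u\in Uf(S)$ with $R^{ue}st$, $R^{ue}su$, $V(\psi)\in t$, $V(\psi)\notin u$. Set $\Omega=\{Y\mid m_\Delta(X\cup Y)\in s\text{ for all }X\}$. Using Prop.~\ref{prop.intersection} (and ultrafilters closed under supersets) one shows $\Omega$ is closed under finite intersection; note $\Omega$ contains $\emptyset$ only if $m_\Delta(X)\in s$ for all $X$, but taking $X=V(\psi)$ gives $m_\Delta(V(\psi))\in s$, impossible since $m_\nabla(V(\psi))\in s$ — so in fact one checks that for every $Y\in\Omega$ both $Y\cap V(\psi)\neq\emptyset$ and $Y\cap\overline{V(\psi)}\neq\emptyset$: intersect $m_\Delta(X\cup Y)\in s$ (for suitable $X$, e.g.\ $X=\emptyset$ giving $m_\Delta(Y)\in s$) with $m_\nabla(V(\psi))\in s$, note this is nonempty, pick a point $x$ in it, get successors $y\in V(\psi)$, $z\notin V(\psi)$ of $x$ witnessing $x\in m_\nabla(V(\psi))$, and push them through $x\in m_\Delta(Y)$ and $x\in m_\Delta(X\cup Y)$. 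Therefore $\Omega\cup\{V(\psi)\}$ and $\Omega\cup\{\overline{V(\psi)}\}$ both have the finite intersection property; by the Ultrafilter Theorem extend them to ultrafilters $t\supseteq\Omega\cup\{V(\psi)\}$ and $u\supseteq\Omega\cup\{\overline{V(\psi)}\}$. Then $\Omega\subseteq t$ and $\Omega\subseteq u$ give $R^{ue}st$ and $R^{ue}su$ by definition of $R^{ue}$, and $V(\psi)\in t$, $\overline{V(\psi)}\in u$ (so $V(\psi)\notin u$) give $ue(\M),s\vDash\nabla\psi$ via the induction hypothesis. The corollary about $(\M,w)\equiv_{\mathcal{L}(\nabla)}(ue(\M),\pi_w)$ then follows as before: $\M,w\vDash\phi$ iff $w\in V(\phi)$ iff $V(\phi)\in\pi_w$ iff $ue(\M),\pi_w\vDash\phi$.

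The main obstacle I anticipate is bookkeeping in the $\nabla$-case: getting the complementation identities $m_\Delta(Z)=\overline{m_\nabla(Z)}$ and $m_\Delta(Z)=m_\Delta(\overline{Z})$ applied with exactly the right $X_i$'s so that Prop.~\ref{prop.subseteq}(b) fires with the ``focus'' points $t,t'$ separated by $V(\psi)$; and, in the converse direction, verifying that $\Omega$ interacts correctly with the single set $V(\psi)$ even though the definition of $R^{ue}$ in this $\forall$-version quantifies $m_\Delta(X\cup Y)$ over all $X$ rather than fixing one witness $X$ as in the $\exists\forall$-version. Everything else (the Boolean cases, closure of $\Omega$ under intersection, the final extension via the Ultrafilter Theorem) is routine and parallels the $\exists\forall$-version verbatim.
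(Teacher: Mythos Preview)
Your proposal is correct and follows essentially the same approach as the paper's proof: induction on $\phi$, with the $\nabla$-case split into the two directions, using Prop.~\ref{prop.subseteq}(b) for one direction and the finite-intersection-property argument via $\Omega=\{Y\mid m_\Delta(X\cup Y)\in s\text{ for all }X\}$ (the paper calls it $\Xi$) together with the Ultrafilter Theorem for the other.

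One small remark: in the direction $ue(\M),s\vDash\nabla\psi\Rightarrow V(\nabla\psi)\in s$, your proof-by-contradiction wrapper is superfluous. Once you have $m_\nabla(\overline{X_1}\cap V(\psi))\in s$ and $m_\nabla(\overline{X_0}\cap\overline{V(\psi)})\in s$, Prop.~\ref{prop.subseteq}(b) and closure of $s$ under intersections and supersets give $m_\nabla(V(\psi))\in s$ \emph{directly}; you never use the hypothesis $m_\Delta(V(\psi))\in s$ in the derivation. The paper does exactly this direct argument. In the converse direction, to show $Y\cap V(\psi)\neq\emptyset$ you will indeed need the specific instance $X=\overline{V(\psi)}$ (not just $X=\emptyset$) of $m_\Delta(X\cup Y)\in s$, so when you ``pick a point $x$'' make sure it lies in the finite intersection $m_\Delta(Y)\cap m_\Delta(\overline{V(\psi)}\cup Y)\cap m_\nabla(V(\psi))\in s$; this is what the paper does as well.
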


\begin{proof}
By induction on $\phi\in\mathcal{L}(\nabla)$. We only consider the case $\nabla\phi$.

First assume that $ue(\M),s\vDash\nabla\phi$. Then there exist $t,u\in Uf(S)$ such that $R^{ue}st$ and $R^{ue}su$ and $ue(\M),t\vDash\phi$ and $ue(\M),u\nvDash\phi$. By induction hypothesis, $V(\phi)\in t$ (i.e. $\overline{V(\phi)}\notin t$) and $V(\phi)\notin u$. By definition of $R^{ue}$, $m_\Delta(X\cup \overline{V(\phi)})\notin s$ for some $X$ and $m_\Delta(Y\cup V(\phi))\notin s$ for some $Y$. Thus $\in s$ and $m_\nabla(\overline{Y}\cap \overline{V(\phi)})\in s$. Since $s$ is closed under intersection, we have $m_\nabla(\overline{X}\cap V(\phi))\cap m_\nabla(\overline{Y}\cap \overline{V(\phi)})\in s$. Since $s$ is closed under supersets, by Prop.~\ref{prop.subseteq}, we infer that $m_\nabla(V(\phi))\in s$, and thus $V(\nabla\phi)\in s$.

Conversely, assume that $V(\nabla\phi)\in s$, to show $ue(\M),s\vDash\nabla\phi$. By induction hypothesis, we need to find two ultrafilters $t,u$ over $S$ such that $R^{ue}st$ and $R^{ue}su$ and $V(\phi)\in t$ and $V(\phi)\notin u$.

Now define $\Xi=\{Y\mid m_\Delta(X\cup Y)\in s\text{ for all }X\}$. We first show that $\Xi$ is closed under intersection. Let $Y,Z\in \Xi$. Then $m_\Delta(X\cup Y)\in s$ and $m_\Delta(X\cup Z)\in s$ for all $X$. Since $s$ is closed under intersection, $m_\Delta(X\cup Y)\cap m_\Delta(X\cup Z)\in s$. Since $m_\Delta(X\cup Y)\cap m_\Delta(X\cup Z)\subseteq m_\Delta(X\cup (Y\cap Z))$ (by Prop.~\ref{prop.intersection}) and $s$ is closed under supersets, $m_\Delta(X\cup (Y\cap Z))\in s$. Since $X$ is arbitrary, we have $Y\cap Z\in \Xi$.

We now show that

For any $Y\in\Xi$, $Y\cap V(\phi)\neq \emptyset$ and $Y\cap \overline{V(\phi)}\neq \emptyset$.

Let $Y\in \Xi$. Then $m_\Delta(X\cup Y)\in s\text{ for all }X$. By assumption and the fact that $s$ is closed under intersection, $m_\Delta(X\cup Y)\cap V(\nabla\phi)\in s$ for all $X$. Since $s$ does not contain the empty set, we have $m_\Delta(X\cup Y)\cap V(\nabla\phi)\neq\emptyset$ for all $X$. Thus there is an $x$ such that $x\in m_\Delta(X\cup Y)\cap V(\nabla\phi)$ for all $X$. By $x\in V(\nabla\phi)$, there exist $y,z\in R(x)$ such that $y\in V(\phi)$ and $z\notin V(\phi)$ (thus $z\in \overline{V(\phi)}$). We have also $x\in m_\Delta(X\cup Y)$ for all $X$. Letting $X=\overline{V(\phi)}$, we get $x\in m_\Delta(\overline{V(\phi)}\cup Y)$, and then $y\in \overline{V(\phi)}\cup Y$ iff $z\in\overline{V(\phi)}\cup Y$, then $y\in \overline{V(\phi)}\cup Y$, and hence $y\in Y$. This implies that $Y\cap V(\phi)\neq \emptyset$. Now letting $X=\emptyset$, we get $x\in m_\Delta(Y)$, and then $y\in Y$ iff $z\in Y$, therefore $z\in Y$, which implies that $Y\cap \overline{V(\phi)}\neq \emptyset$.

Then similar to the corresponding part of the proof of Prop.~\ref{prop.ultrafilter1}, we can find two states $t,u\in Uf(S)$ with $R^{ue}st$ and $R^{ue}su$ and $V(\phi)\in t$ and $V(\phi)\notin u$.
\end{proof}

\begin{proposition}
Let $\M$ be a model. Then $ue(\M)$ is $\mathcal{L}(\nabla)$-saturated.
\end{proposition}

\begin{proof}
Given any set $\Gamma\subseteq \mathcal{L}(\nabla)$, and any ultrafilter $s$ over $S$, suppose that $\Gamma$ is finitely satisfiable in the set of successors of $s$, to show that $\Gamma$ is satisfiable in the set of successors of $s$. For this, we define the following set
$$\Theta=\{V(\phi)\mid \phi\in\Gamma'\}\cup\{Y\mid m_\Delta(X\cup Y)\in s\text{ for all }X\},$$
where $\Gamma'$ is the set of finite conjunctions of formulas in $\Gamma$. In the following we will show that $\Theta$ has the finite intersection property.

Prop.~\ref{prop.ultrafilter2} has shown that $\{Y\mid m_\Delta(X\cup Y)\in s\text{ for all }X\}$ is closed under intersection. Moreover, $\{V(\phi)\mid \phi\in\Gamma'\}$ is closed under intersection, as shown below. Let $A,B\in \{V(\phi)\mid \phi\in\Gamma'\}$. Then $A=V(\phi)$ and $B=V(\psi)$ for some $\phi,\psi\in\Gamma'$. One may easily verify that $A\cap B=V(\phi\land\psi)$ and $\phi\land\psi\in\Gamma'$, and thus $A\cap B\in \{V(\phi)\mid \phi\in\Gamma'\}$.

Next we show that for any $\phi\in\Gamma'$ and $Y\subseteq S$ for which $m_\Delta(X\cup Y)\in s\text{ for all }X$, we have $V(\phi)\cap Y\neq \emptyset$. By $\phi\in\Gamma'$ and supposition, there is an ultrafilter $t$ over $S$ such that $R^{ue}st$ and $ue(\M),t\vDash\phi$. By Prop.~\ref{prop.ultrafilter2}, $V(\phi)\in t$; by $R^{ue}st$ and the definition of $R^{ue}$, $Y\in t$. Since $t$ is closed under intersection, we obtain $V(\phi)\cap Y\in t$. Since $t$ does not contain the empty set, $V(\phi)\cap Y\neq \emptyset$.

We have thus shown that $\Theta$ has the finite intersection property. Now by the Ultrafilter Theorem, $\Theta$ can be extended to an ultrafilter $u$. From $\{Y\mid m_\Delta(X\cup Y)\in s\text{ for all }X\}\subseteq u$, it follows that $R^{ue}su$; moreover, from $\{V(\phi)\mid \phi\in\Gamma'\}\subseteq u$, it follows that $ue(\M),u\vDash\Gamma$: since for any $\phi\in \Gamma$, $\phi\in\Gamma'$, then $V(\phi)\in u$, applying Prop.~\ref{prop.ultrafilter2} we derive $ue(\M),u\vDash\phi$.
\end{proof}

Since the class of $\mathcal{L}(\nabla)$-saturated models has the Hennessy-Milner property~\cite[Prop.~3.9]{Fanetal:2014}, we obtain the main result of this article: modal equivalence can be thought of as $\mathcal{L}(\nabla)$ equivalence between ultrafilter extensions.

\begin{theorem}
Let $\M$ and $\M'$ be models and $w\in \M$, $w'\in\M'$. Then
$$(\M,w)\equiv_{\mathcal{L}(\nabla)}(\M',w')\Longleftrightarrow(ue(\M),\pi_w)\bis_\Delta(ue(\M'),\pi_{w'}).$$
\end{theorem}

\subsection{Neighborhood contingency logics}\label{sec.neighborhoodcontingency}

Given a neighborhood model $\M=\lr{S,N,V}$, the non-contingency operator is interpreted in the following.
\[
\begin{array}{lll}
\M,s\Vdash\Delta\phi&\iff&V(\phi)\in N(s)\text{ or }\overline{V(\phi)}\in N(s)\\
\end{array}
\]
Where $V(\phi)=\{w\in S\mid \M,w\Vdash\phi\}$.

Define $m_C(X)=\{s\in S\mid X\in N(s)\text{ or }\overline{X}\in N(s)\}$. It is obvious that $m_C(X)=m_C(\overline{X})$ and $m_C(V(\phi))=V(\Delta\phi)$.

Recall that in the completeness of classical contingency logic $\mathbb{CCL}$~\cite{FanvD:neighborhood}, the canonical model $\M=\lr{S,N,V}$ is defined as follows:
\begin{itemize}
\item $S=\{\Sigma\mid \Sigma\text{ is a maximal consistent set}\}$
\item $N(\Sigma)=\{|\phi|\mid \Delta\phi\in \Sigma\}$, where $|\phi|=\{\Sigma\in S\mid \phi\in\Sigma\}$
\item For each $p\in\BP$, $\Sigma\in V(p)$ iff $p\in\Sigma$.
\end{itemize}

The following is a standard result in classical contingency logic~\cite[Lemma~1]{FanvD:neighborhood}.
\begin{proposition}\label{prop.comp-classical-modal} Let $\Sigma\in S$. Then for all $\phi\in\mathcal{L}(\nabla)$, for each $p\in\BP\cup\{\nabla\phi\}$, we have
$$p\in\Sigma\text{ iff }\Sigma\in V(p).$$
\end{proposition}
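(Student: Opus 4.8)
The plan is to read this Proposition as the atomic-and-modal part of the Truth Lemma for the canonical model $\M=\lr{S,N,V}$ of $\mathbb{CCL}$, and to establish it by induction on $\phi$, carrying the Boolean connectives along only as far as the modal clause needs them (those cases are in any event exactly what Theorem~\ref{prop.general} absorbs afterwards in the pipeline). Throughout I would lean on two auxiliary identities: $V(\phi)=|\phi|$, which is the content of the induction, and $\overline{|\phi|}=|\neg\phi|$, which is immediate from maximal consistency (Fact~\ref{fact.mcs}(v)).

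First I would dispose of the atomic case: for $p\in\BP$, the equivalence $p\in\Sigma$ iff $\Sigma\in V(p)$ is literally the valuation clause in the definition of the canonical model. The Boolean steps ($\neg$, $\wedge$) are the routine maximal-consistent-set computations via Fact~\ref{fact.mcs}(ii),(iii),(v); in particular they justify $V(\phi)=|\phi|$ for the proper subformula $\phi$ when we come to treat $\nabla\phi$.

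The substance is the case $p=\nabla\phi$. Since $\Delta\phi=\neg\nabla\phi$ and $\Sigma$ is maximal consistent, by Fact~\ref{fact.mcs}(v) and the semantic clause for $\Delta$ it suffices to show $\Delta\phi\in\Sigma$ iff $V(\phi)\in N(\Sigma)$ or $\overline{V(\phi)}\in N(\Sigma)$. Rewriting via $V(\phi)=|\phi|$, $\overline{|\phi|}=|\neg\phi|$ and $N(\Sigma)=\{|\psi|\mid\Delta\psi\in\Sigma\}$, the left-to-right direction is immediate: $\Delta\phi\in\Sigma$ gives $|\phi|\in N(\Sigma)$. For right-to-left, suppose $|\phi|\in N(\Sigma)$; then $|\phi|=|\psi|$ for some $\psi$ with $\Delta\psi\in\Sigma$, a Lindenbaum-style argument turns the canonical-model identity $|\phi|=|\psi|$ into the syntactic fact $\vdash\phi\leftrightarrow\psi$, the congruence rule $\REKw$ then gives $\vdash\Delta\psi\to\Delta\phi$, and Fact~\ref{fact.mcs}(iii) yields $\Delta\phi\in\Sigma$. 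The remaining subcase $|\neg\phi|=\overline{|\phi|}\in N(\Sigma)$ is identical except that it first produces $\vdash\neg\phi\leftrightarrow\psi$, hence $\vdash\Delta\psi\to\Delta\neg\phi$ by $\REKw$, and then invokes the axiom $\EquiKw$, $\vdash\Delta\neg\phi\leftrightarrow\Delta\phi$, to conclude.

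The only place where anything specific to $\mathbb{CCL}$ enters — and hence the step I would watch most carefully — is this right-to-left direction: the passage from the set-theoretic identity $|\phi|=|\psi|$ inside the canonical model back to provable equivalence, followed by the congruentiality of $\Delta$ (the rule $\REKw$, together with the axiom $\EquiKw$ that is needed precisely to cover the ``$\overline{V(\phi)}\in N(\Sigma)$'' disjunct, since $\overline{|\phi|}$ is again of the form $|\cdot|$, namely $|\neg\phi|$). Everything else is bookkeeping; the complete details appear in \cite[Lemma~1]{FanvD:neighborhood}.
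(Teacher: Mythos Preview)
Your reconstruction is correct and is precisely the standard Truth-Lemma argument for $\mathbb{CCL}$: the paper itself does not prove this proposition but simply cites \cite[Lemma~1]{FanvD:neighborhood}, whose proof proceeds exactly as you outline (full induction, with the $\Delta$-case handled via $|\phi|=|\psi|\Rightarrow{}\vdash\phi\leftrightarrow\psi$, the rule $\REKw$, and the axiom $\EquiKw$ for the complement disjunct). There is nothing to add.
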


Then Prop.~\ref{prop.general} gives us the following result.
\begin{proposition}\label{prop.modal}
Let $\Sigma\in S$. Then for all $\phi\in\mathcal{L(\nabla)}$, we have
$$\phi\in\Sigma\text{ iff }V(\phi)\in \pi_\Sigma.$$
\end{proposition}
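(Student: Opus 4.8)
The plan is to mirror the two earlier applications of Prop.~\ref{prop.general}: the only thing to check is that the hypothesis $(\ast)$ of Prop.~\ref{prop.general} is met in the canonical model for $\mathbb{CCL}$, with $\circledcirc$ instantiated as $\nabla$ (equivalently $\Delta$). So I would begin by observing that, by the definition of the canonical valuation, for each $p\in\BP$ we have $p\in\Sigma$ iff $\Sigma\in V(p)$ by stipulation, and that Prop.~\ref{prop.comp-classical-modal} supplies exactly the remaining clause, namely $\nabla\phi\in\Sigma$ iff $\Sigma\in V(\nabla\phi)$ for every $\phi\in\mathcal{L}(\nabla)$. Together these say precisely that $(\ast)$ holds for every $\phi\in\mathcal{L}(\nabla)$, i.e. that for each $p\in\BP\cup\{\nabla\phi\}$, $p\in\Sigma$ iff $\Sigma\in V(p)$.

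Having verified $(\ast)$, I would simply invoke Prop.~\ref{prop.general} to conclude that for all $\phi\in\mathcal{L}(\nabla)$, $\phi\in\Sigma$ iff $V(\phi)\in\pi_\Sigma$, which is the desired statement. This is the same one-line deduction used to pass from Prop.~\ref{prop.comp-classical-modal} to Prop.~\ref{prop.modal} in the normal and classical modal cases; the contingency setting changes nothing structural because Prop.~\ref{prop.general} treats $\circledcirc$ as an arbitrary modality and only needs the truth lemma at atoms and at the single outermost modal formula $\circledcirc\phi$.

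One small caveat worth flagging: Prop.~\ref{prop.general} as stated handles the inductive cases $p$, $\neg\psi$, and $\psi\land\chi$, so it implicitly assumes $\mathcal{L}(\nabla)$ is built from $\BP$ using $\neg$, $\land$, and the primitive modality (here $\nabla$, with $\Delta:=\neg\nabla$). So I would note that the language of contingency logic is indeed generated this way, so that the cited proposition applies verbatim; no separate induction is needed here.

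The main obstacle is essentially nil — the genuine work sits in Prop.~\ref{prop.comp-classical-modal} (the completeness/truth lemma for the canonical neighborhood model of $\mathbb{CCL}$, established in~\cite{FanvD:neighborhood}), which we are entitled to assume. The only point requiring the slightest care is making sure the statement of $(\ast)$ is read with the quantifier over $\phi$ in the right place — $(\ast)$ must hold for \emph{all} $\phi\in\mathcal{L}(\nabla)$ simultaneously, which is exactly what Prop.~\ref{prop.comp-classical-modal} delivers — after which Prop.~\ref{prop.general} closes the argument immediately.
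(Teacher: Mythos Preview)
Your proposal is correct and follows exactly the paper's approach: the paper simply states that Prop.~\ref{prop.general} gives the result immediately from Prop.~\ref{prop.comp-classical-modal}, and your write-up just unpacks this one-line deduction.
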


Now applying the method in Sec.~\ref{sec.normalmodal}, we obtain the notion of ultrafilter extensions in classical contingency logics.

\begin{definition}[Ultrafilter extensions] Let $\M=\lr{S,N,V}$ be a neighborhood model. The triple $ue(\M)=\lr{Uf(S),N^{ue},V^{ue}}$ is the {\em ultrafilter extension} of $\M$, if
\begin{itemize}
\item $Uf(S)=\{s\mid s\text{ is an ultrafilter over }S\}$
\item $N^{ue}(s)=\{\widehat{X}\mid m_C(X)\in s\}$, where $\widehat{X}=\{s\in Uf(S)\mid X\in s\}$.
\item For each $p\in \BP$ and $s\in Uf(S)$, $s\in V^{ue}(p)$ iff $V(p)\in s$.
\end{itemize}
\end{definition}


For all $s\in Uf(S)$ and for all $X\subseteq S$, we can show $\overline{\hat{X}}=\hat{\overline{X}}$. With this and the fact that $m_C(X)=m_C(\overline{X})$, we can show that $N^{ue}$ is closed under complements, in that for all $U\subseteq Uf(S)$, if $U\in N^{ue}(s)$, then $\overline{U}\in N^{ue}(s)$.

\begin{proposition}\label{prop.ultrafilter-delta2}
For all formulas $\phi\in\mathcal{L}(\nabla)$ and all ultrafilters $s$ over $S$, we have $V(\phi)\in s$ iff $ue(\M),s\Vdash\phi$. As a corollary, for all $w\in S$, we have $(\M,w)\equiv_{\mathcal{L}(\nabla)}(ue(\M),\pi_w)$.
\end{proposition}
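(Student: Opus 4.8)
The plan is to prove the biconditional $V(\phi)\in s \iff ue(\M),s\Vdash\phi$ by induction on the structure of $\phi\in\mathcal{L}(\nabla)$, exactly paralleling the earlier proofs of Prop.~\ref{prop.ultrafilter1} and Prop.~\ref{prop.ultrafilter2}. The atomic case is immediate from the definition of $V^{ue}$; the negation case uses that ultrafilters contain exactly one of $X$ and $\overline{X}$ together with $V(\neg\phi)=\overline{V(\phi)}$; the conjunction case uses closure of ultrafilters under intersection and supersets together with $V(\phi\land\psi)=V(\phi)\cap V(\psi)$. The only case requiring work is $\Delta\phi$ (equivalently $\nabla\phi$, since $\nabla=\neg\Delta$), and there I would work directly with $\Delta$ because $N^{ue}$ is phrased via $m_C$ and $m_C(V(\phi))=V(\Delta\phi)$.

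For the $\Delta\phi$ case, first I would record the auxiliary fact $(\ast)$: $\widehat{V(\phi)}=V^{ue}(\phi)$, which follows from the induction hypothesis since for any ultrafilter $t$ we have $t\in\widehat{V(\phi)}$ iff $V(\phi)\in t$ iff $ue(\M),t\Vdash\phi$ iff $t\in V^{ue}(\phi)$. Then I would chain equivalences: $ue(\M),s\Vdash\Delta\phi$ iff $V^{ue}(\phi)\in N^{ue}(s)$ or $\overline{V^{ue}(\phi)}\in N^{ue}(s)$; using $(\ast)$ and $\overline{\widehat{X}}=\widehat{\overline{X}}$ this becomes $\widehat{V(\phi)}\in N^{ue}(s)$ or $\widehat{\overline{V(\phi)}}\in N^{ue}(s)$. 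Now by the definition of $N^{ue}$, $\widehat{V(\phi)}\in N^{ue}(s)$ means $\widehat{V(\phi)}=\widehat{X}$ for some $X$ with $m_C(X)\in s$; here the key subtlety is that $\widehat{(\cdot)}$ is injective on subsets of $S$ (if $\widehat{X}=\widehat{Y}$ then $X=Y$, proved via principal ultrafilters, cf.\ the argument in Fact~\ref{fact.monotonic}(1) pattern), so this forces $m_C(V(\phi))\in s$. Conversely $m_C(V(\phi))\in s$ gives $\widehat{V(\phi)}\in N^{ue}(s)$ directly. Since $m_C(V(\phi))=m_C(\overline{V(\phi)})=V(\Delta\phi)$, all four disjuncts collapse to the single condition $V(\Delta\phi)\in s$, completing the equivalence.

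The main obstacle is handling the disjunctive ``or'' in the semantics of $\Delta$ together with the ``exists $X$'' hidden in the definition of $N^{ue}$: one must be careful that membership of $\widehat{V(\phi)}$ in $N^{ue}(s)$ is equivalent to $m_C(V(\phi))\in s$ rather than merely implied by it, and this is where injectivity of $X\mapsto\widehat{X}$ on $\mathcal{P}(S)$ is essential (note this injectivity fails for arbitrary subsets of $Uf(S)$, but $\widehat{V(\phi)}$ is always of the form $\widehat{X}$). The fact that $N^{ue}$ is closed under complements, already noted before the statement, makes the two disjuncts $\widehat{V(\phi)}\in N^{ue}(s)$ and $\widehat{\overline{V(\phi)}}\in N^{ue}(s)$ equivalent to each other, so really only one needs to be analyzed. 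The corollary is then immediate: taking $s=\pi_w$ and using $V(\phi)\in\pi_w$ iff $w\in V(\phi)$ iff $\M,w\vDash\phi$, we get $(\M,w)\equiv_{\mathcal{L}(\nabla)}(ue(\M),\pi_w)$ for every $w\in S$.
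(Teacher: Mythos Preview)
Your proposal is correct and follows essentially the same chain of equivalences as the paper's proof. In fact you are more careful than the paper on one point: you explicitly invoke the injectivity of $X\mapsto\widehat{X}$ on $\mathcal{P}(S)$ to pass from $\widehat{V(\phi)}\in N^{ue}(s)$ to $m_C(V(\phi))\in s$, a step the paper writes down without justification.
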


\begin{proof}
By induction on $\phi\in\mathcal{L}(\nabla)$. The only nontrivial case is $\Delta\phi$. For this, we have the following equivalences:
\[
\begin{array}{ll}
&ue(\M),s\vDash\Delta\phi\\
\iff & V^{ue}(\phi)\in N^{ue}(s)\text{ or }\overline{V^{ue}(\phi)}\in N^{ue}(s)\\
\stackrel{\text{IH}}\iff&\hat{V(\phi)}\in N^{ue}(s)\text{ or }\overline{\hat{V(\phi)}}\in N^{ue}(s)\\
\iff&\hat{V(\phi)}\in N^{ue}(s)\text{ or }\hat{\overline{V(\phi)}}\in N^{ue}(s)\\
\iff & m_C(V(\phi))\in s\text{ or }m_C(\overline{V(\phi)})\in s\\
\iff & m_C(V(\phi))\in s\\
\iff &V(\Delta\phi)\in s\\
\end{array}
\]
\weg{\[
\begin{array}{ll}
&ue(\M),s\vDash\Delta\phi\\
\iff & V^{ue}(\phi)\in N^{ue}(s)\text{ or }\overline{V^{ue}(\phi)}\in N^{ue}(s)\\
\stackrel{\text{IH}}\iff&\{s\in Uf(S)\mid V(\phi)\in s\}\in N^{ue}(s)\text{ or }\overline{\{s\in Uf(S)\mid V(\phi)\in s\}}\in N^{ue}(s)\\
\stackrel{(\star)}\iff&\{s\in Uf(S)\mid V(\phi)\in s\}\in N^{ue}(s)\text{ or }\{s\in Uf(S)\mid \overline{V(\phi)}\in s\}\in N^{ue}(s)\\
\iff & m_C(V(\phi))\in s\text{ or }m_C(\overline{V(\phi)})\in s\\
\stackrel{(\star\star)}\iff & m_C(V(\phi))\in s\\
\iff &V(\Delta\phi)\in s\\
\end{array}
\]}
\weg{\[
\begin{array}{ll}
&V(\Delta\phi)\in s\\
\iff & m_N(V(\phi))\in s\\
\iff &\{s\in Uf(S)\mid V(\phi)\in s\}\in N^{ue}(s)\\
\stackrel{\text{IH}}\iff&V^{ue}(\phi)\in N^{ue}(s)\\
\iff & ue(\M),s\vDash\Delta\phi\\
\end{array}
\]
In the proof of $(\star)$, $\overline{\{s\in Uf(S)\mid V(\phi)\in s\}}=\{s\in Uf(S)\mid \overline{V(\phi)}\in s\}$ is because $s$ is an ultrafilter. $(\star\star)$ holds since $m_C(X)=m_C(\overline{X})$ for all $X$.}
\end{proof}

In the remainder of this section, we show that $\mathcal{L}(\nabla)$-equivalence can be characterized as nbh-$\Delta$-bisimilarity-somewhere-else.\footnote{As for the definition of nbh-$\Delta$-bisimilarity, refer to~\cite[Def.~9]{Bakhtiarietal:2017}; this definition is simplified in~\cite{Fan:2017a}.} For this, we need to introduce a notion of $\Delta$-saturation for $\mathcal{L}(\nabla)$ in the neighborhood setting, called `$\mathcal{L}_\Delta$-saturation' in~\cite[Def.~11]{Bakhtiarietal:2017}.

\begin{definition}[$\Delta$-saturation]
Let $\M=\lr{S,N,V}$ be a neighborhood model. A set $X\subseteq S$ is $\Delta$-compact, if every set of $\mathcal{L}(\nabla)$-formulas that is finitely satisfiable in $X$ is itself also satisfiable in $X$. $\M$ is said to be {\em $\Delta$-saturated}, if for all $s\in S$ and all $\equiv_{\mathcal{L}(\nabla)}$-closed neighborhoods $X\in N(s)$, both $X$ and $\overline{X}$ are $\Delta$-compact.
\end{definition}

It is shown in~\cite[Thm.1]{Bakhtiarietal:2017} that the class of $\Delta$-saturated models is a Hennessy-Milner class. That is, on $\Delta$-saturated models $\M$ and $\M'$ and states $s$ in $\M$ and $s'$ in $\M'$, if $(\M,s)\equiv_{\mathcal{L}(\nabla)}(\M',s')$, then $(\M,s)\sim_\Delta(\M',s').$


\begin{proposition}
Let $\M$ be a neighborhood model. Then $ue(\M)$ is $\Delta$-saturated.
\end{proposition}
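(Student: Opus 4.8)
The plan is to unfold the definition of $\Delta$-saturation for $ue(\M)$ and reduce it to the finite-intersection-property argument already used for the $\mathcal{L}(\nabla)$-saturation results in the Kripke case. By definition, $ue(\M)$ is $\Delta$-saturated iff for every ultrafilter $s$ over $S$ and every $\equiv_{\mathcal{L}(\nabla)}$-closed $U\in N^{ue}(s)$, both $U$ and $\overline{U}$ are $\Delta$-compact in $ue(\M)$. Two observations make this manageable. First, every member of $N^{ue}(s)$ is literally of the form $\widehat{X}$ with $m_C(X)\in s$, so one may drop the $\equiv_{\mathcal{L}(\nabla)}$-closedness hypothesis altogether (it only weakens what must be checked). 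Second, $N^{ue}$ is closed under complements (noted just before Prop.~\ref{prop.ultrafilter-delta2}), so $\overline{U}\in N^{ue}(s)$ as well, hence is again of the form $\widehat{Z}$ with $m_C(Z)\in s$. It therefore suffices to prove the single claim: whenever $\widehat{X}\in N^{ue}(s)$, the set $\widehat{X}$ is $\Delta$-compact in $ue(\M)$; applying it to $U$ and to $\overline{U}$ then finishes the proof.

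For the claim, let $\Gamma\subseteq\mathcal{L}(\nabla)$ be finitely satisfiable in $\widehat{X}$ and let $\Gamma'$ be the set of finite conjunctions of formulas in $\Gamma$, so that $\{V(\phi)\mid\phi\in\Gamma'\}$ is closed under intersection (using $V(\phi)\cap V(\psi)=V(\phi\land\psi)$ and $\phi\land\psi\in\Gamma'$). Put
$$\Theta=\{V(\phi)\mid\phi\in\Gamma'\}\cup\{X\}.$$
I would show $\Theta$ has the finite intersection property; since $\{V(\phi)\mid\phi\in\Gamma'\}$ is closed under intersection, it is enough to see that $V(\phi)\cap X\neq\emptyset$ for each $\phi\in\Gamma'$. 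By finite satisfiability there is $t_\phi\in\widehat{X}$ with $ue(\M),t_\phi\Vdash\phi$; then $X\in t_\phi$ (as $t_\phi\in\widehat{X}$) and $V(\phi)\in t_\phi$ (by Prop.~\ref{prop.ultrafilter-delta2}), hence $V(\phi)\cap X\in t_\phi$ is nonempty. By the Ultrafilter Theorem, $\Theta$ extends to an ultrafilter $t$; then $X\in t$, so $t\in\widehat{X}$, and for every $\phi\in\Gamma\subseteq\Gamma'$ we have $V(\phi)\in t$, so $ue(\M),t\Vdash\phi$ by Prop.~\ref{prop.ultrafilter-delta2}. Thus $ue(\M),t\Vdash\Gamma$ with $t\in\widehat{X}$, i.e. $\Gamma$ is satisfiable in $\widehat{X}$, which proves $\Delta$-compactness.

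I do not expect a real obstacle here: this is essentially the neighborhood analogue of the saturation arguments already given for the $\exists\forall$- and $\forall$-versions in the Kripke setting, and the only content-bearing input is Prop.~\ref{prop.ultrafilter-delta2} (truth in $ue(\M)$ coincides with membership in the ultrafilter). The points that need a little care are purely bookkeeping: recording that every neighborhood of $s$ in $ue(\M)$ has the form $\widehat{X}$ so that the closedness hypothesis becomes superfluous, and obtaining the $\overline{U}$ case for free from closure of $N^{ue}$ under complements (equivalently, from $\overline{\widehat{X}}=\widehat{\overline{X}}$ together with $m_C(X)=m_C(\overline{X})$). Degenerate cases such as $\widehat{X}=\emptyset$ are vacuous and require no separate treatment.
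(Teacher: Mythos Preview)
Your proposal is correct and follows essentially the same route as the paper: reduce to showing $\widehat{X}$ is $\Delta$-compact (using closure of $N^{ue}$ under complements and $\overline{\widehat{X}}=\widehat{\overline{X}}$ for the other half), then use Prop.~\ref{prop.ultrafilter-delta2} to get $\{V(\phi)\mid\phi\in\Gamma\}\cup\{X\}$ has the finite intersection property and extend by the Ultrafilter Theorem. The only cosmetic difference is that the paper works directly with finite subsets of $\Pi$ rather than introducing the auxiliary set $\Gamma'$ of finite conjunctions.
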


\begin{proof}
Given any $s\in Uf(S)$ and any $\equiv_{\mathcal{L}(\nabla)}$-closed neighborhood $\hat{X}\in N^{ue}(s)$, to show $\hat{X}$ and $\overline{\hat{X}}$ are both $\Delta$-compact. Since $N^{ue}$ is closed under complements and $\overline{\hat{X}}=\hat{\overline{X}}$, it is sufficient to show that $\hat{X}$ is $\Delta$-compact~\cite[p.~6]{Fan:2017a}.

Suppose that $\Pi\subseteq \mathcal{L}(\nabla)$ is finitely satisfiable in $\hat{X}$. This means that for any finite set $\{\phi_1,\cdots,\phi_m\}\subseteq \Pi$, there exists $u\in \hat{X}$ such that $\M^{ue},u\vDash\phi_1\land\cdots\land\phi_m$. By Prop.~\ref{prop.ultrafilter-delta2}, $V(\phi_1)\cap\cdots\cap V(\phi_m)=V(\phi_1\land\cdots\land\phi_m)\in u$. By $u\in\hat{X}$, $X\in u$. Since $u$ is closed under intersection, $V(\phi_1)\cap\cdots\cap V(\phi_m)\cap X\in u$. Since $u$ does not contain the empty set, it follows that $V(\phi_1)\cap\cdots\cap V(\phi_m)\cap X\neq\emptyset$.

Because $\phi_1,\cdots,\phi_m$ are arbitrary, the set $\{V(\phi)\mid \phi\in\Pi\}\cup\{X\}$ has the finite intersection property. By the Ultrafilter Theorem, there is an $u'\in Uf(S)$ such that $\{V(\phi)\mid \phi\in\Pi\}\cup\{X\}\subseteq u'$. From $\{V(\phi)\mid \phi\in\Pi\}\subseteq u'$ and Prop.~\ref{prop.ultrafilter-delta2}, it follows that $ue(\M),u'\vDash\Pi$; from $X\in u'$, it follows that $u'\in \hat{X}$. Therefore, $\Pi$ is satisfiable in $\hat{X}$.
\end{proof}

Since the class of $\Delta$-saturated models is a Hennessy-Milner class, we obtain a characterization of $\mathcal{L}(\nabla)$-equivalence as nbh-$\Delta$-bisimilarity-somewhere-else --- namely, between ultrafilter extensions.

\begin{theorem}
Let $\M$ and $\M'$ be neighborhood models. Then
$$(\M,w)\equiv_{\mathcal{L}(\nabla)}(\M',w')\iff (ue(\M),\pi_w)\sim_\Delta(ue(\M'),\pi_{w'}).$$
\end{theorem}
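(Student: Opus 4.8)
The plan is to imitate the strategy already used for the $\exists\forall$- and $\forall$-versions in Kripke contingency logic: transport the problem to the ultrafilter extensions, where saturation is available, and then invoke the Hennessy–Milner property. Concretely, the argument rests on three facts now in hand: (1) the corollary of Prop.~\ref{prop.ultrafilter-delta2}, namely $(\M,w)\equiv_{\mathcal{L}(\nabla)}(ue(\M),\pi_w)$ and likewise $(\M',w')\equiv_{\mathcal{L}(\nabla)}(ue(\M'),\pi_{w'})$; (2) the preceding proposition that $ue(\M)$ and $ue(\M')$ are $\Delta$-saturated; and (3) the fact from~\cite[Thm.~1]{Bakhtiarietal:2017} that the class of $\Delta$-saturated models is a Hennessy–Milner class for nbh-$\Delta$-bisimilarity $\sim_\Delta$.

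For the direction from left to right, I would suppose $(\M,w)\equiv_{\mathcal{L}(\nabla)}(\M',w')$. By fact (1) and the symmetry and transitivity of $\equiv_{\mathcal{L}(\nabla)}$ this yields $(ue(\M),\pi_w)\equiv_{\mathcal{L}(\nabla)}(ue(\M'),\pi_{w'})$. By fact (2) both $ue(\M)$ and $ue(\M')$ are $\Delta$-saturated, so fact (3) applies and delivers $(ue(\M),\pi_w)\sim_\Delta(ue(\M'),\pi_{w'})$.

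For the converse, I would use that nbh-$\Delta$-bisimilarity is invariant for $\mathcal{L}(\nabla)$, i.e. $\sim_\Delta$-related states satisfy the same $\mathcal{L}(\nabla)$-formulas; this holds on arbitrary neighborhood models and is the easy half of the Hennessy–Milner story, being a routine induction on formulas using the back-and-forth clauses of $\sim_\Delta$ (cf.~\cite{Bakhtiarietal:2017}). Hence from $(ue(\M),\pi_w)\sim_\Delta(ue(\M'),\pi_{w'})$ we get $(ue(\M),\pi_w)\equiv_{\mathcal{L}(\nabla)}(ue(\M'),\pi_{w'})$, and chaining with fact (1) again gives $(\M,w)\equiv_{\mathcal{L}(\nabla)}(\M',w')$.

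I do not expect a genuine obstacle here: the chain of equivalences needs nothing beyond symmetry and transitivity of $\equiv_{\mathcal{L}(\nabla)}$, and the real mathematical content has already been discharged in Prop.~\ref{prop.ultrafilter-delta2} and in the $\Delta$-saturation proposition. If anything, the only points worth double-checking are that the invariance of $\mathcal{L}(\nabla)$-formulas under $\sim_\Delta$ is available off the shelf (it is, from~\cite{Bakhtiarietal:2017}), and that the notion of ultrafilter extension used here matches the hypotheses under which the Hennessy–Milner property for $\Delta$-saturated models was proved — which it does, since $\Delta$-saturation of $ue(\M)$ was established precisely in the preceding proposition using that $N^{ue}$ is closed under complements and $\overline{\widehat{X}}=\widehat{\overline{X}}$.
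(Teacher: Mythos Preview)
Your proposal is correct and follows exactly the route the paper takes: the theorem is stated as an immediate consequence of Prop.~\ref{prop.ultrafilter-delta2}, the $\Delta$-saturation proposition, and the Hennessy--Milner property of $\Delta$-saturated models from~\cite{Bakhtiarietal:2017}, with the converse direction coming from invariance of $\mathcal{L}(\nabla)$ under $\sim_\Delta$. The paper in fact gives no further detail beyond pointing to these ingredients, so your write-up is, if anything, more explicit than the original.
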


\weg{\section{Accidental truths logic}

\[
\begin{array}{lll}
\M,s\vDash\bullet\phi&\iff&\M,s\vDash\phi,\text{ and there exists }t\in S\text{ such that }Rst\text{ and }\M,t\nvDash\phi\\
\M,s\vDash\circ\phi&\iff&\text{if }\M,s\vDash\phi,\text{ then for all }t\in S\text{ such that }Rst,\text{ we have }\M,t\vDash\phi\\
\end{array}
\]

\begin{definition}
\[
\begin{array}{lll}
m_\bullet(X)&=&\{s\in S\mid X\in s,\text{ and there exists }t\in S\text{ such that }Rst\text{ and }X\notin t\}\\
m_\circ(X)&=&\{s\in S\mid \text{if }X\in s,\text{ then for all }t\in S\text{ such that }Rst, X\in t\}.\\
\end{array}\]
\end{definition}

It should be easy to check that $m_\bullet(V(\phi))=V(\bullet\phi)$ and $m_\circ(V(\phi))=V(\circ\phi)$. Moreover, $m_\circ(X)=\overline{m_\bullet(X)}$.

Recall that the canonical model of $\mathbb{K}^\bullet$ is defined as $\M=\lr{S,R,V}$, where
\begin{itemize}
\item $S=\{\Sigma\mid \Sigma\text{ is a maximal consistent set}\}$
\item $R\Sigma\Gamma$ iff for all $\phi$, if $\circ\phi\land\phi\in \Sigma$, then $\phi\in\Gamma$
\item For each $p\in\BP$, for each $\Sigma\in S$, $\Sigma\in V(p)$ iff $p\in \Sigma$.
\end{itemize}

\section{Knowing value logic}


Recall that

\[
\begin{array}{lll}
\M,s\vDash\Diamond^c_i\phi&\iff&\text{there exists }t,u\in S\text{ such that }sR^c_itu\text{ and }\M,t\vDash\phi\text{ and }\M,u\vDash\phi.\\
\M,s\vDash\Box^c_i\phi&\iff&\text{for all }t,u\in S,\text{ if }sR^c_itu,\text{ then }\M,t\vDash\phi\text{ or }\M,u\vDash\phi.\\
\end{array}
\]

\begin{definition}\label{def.fouroperations}
\[\begin{array}{lll}
m_{i\Diamond}(X)&=&\{s\in S\mid s\to_it\text{ for some }t\in X\}\\
m_{i\Box}(X)&=&\{s\in S\mid \text{for all }t\in S,\text{ if }s\to_it,\text{ then }t\in X\}\\
m_{i\Diamond}^c(X)&=&\{s\in S\mid sR^c_itu \text{ for some } t,u\in X\}\\
m_{i\Box}^c(X)&=&\{s\in S\mid \text{for all }t,u\in S, \text{ if } sR^c_itu,\text{ then }t\in X\text{ or }u\in X\}
\end{array}\]
\end{definition}

It is easy to show that $m_{i\Diamond}(V(\phi))=V(\Diamond_i\phi)$, $m_{i\Box}(V(\phi))=V(\Box_i\phi)$, $m_{i\Diamond}^c(V(\phi))=V(\Diamond^c_i\phi)$, and $m_{i\Box}^c(V(\phi))=V(\Box^c_i\phi)$. Also, $m_{i\Diamond}(X)=\overline{m_{i\Box}(\overline{X})}$, and $m_{i\Diamond}^c(X)=\overline{m_{i\Box}^c(\overline{X})}$. Moreover, $m_{i\Box}(S)=m_{i\Box}^c(S)=S$. We also note that all four operations are monotonic, in that if $X\subseteq Y$, then $m(X)\subseteq m(Y)$ for $m=m_{i\Diamond},m_{i\Box},m_{i\Diamond}^c,m_{i\Box}^c$. Furthermore, $m_{i\Box}$ also possesses the property that $m_{i\Box}(X)\cap m_{i\Box}(Y)\subseteq m_{i\Box}(X\cap Y)$. Finally, one may easily verify that $m_{i\Box}(X\cup Y)\cap m_{i\Diamond}(\overline{X})\subseteq m_{i\Diamond}(Y)$.

The following result will be used in Prop.~\ref{prop.atleastonefip}.
\begin{proposition}\label{prop.mix}
Let $n\in\mathbb{N}$. Then
$$m^c_{i\Box}(Y_0)\cap \cdots\cap m^c_{i\Box}(Y_n)\cap m_{i\Diamond}(\overline{Y_0}\cap\cdots\cap \overline{Y_n})\subseteq m^c_{i\Box}(Y_0\cap \cdots\cap Y_n).$$
\end{proposition}

\begin{proof}
Suppose not, i.e., there exists $s$ such that $s\in m^c_{i\Box}(Y_0)\cap \cdots\cap m^c_{i\Box}(Y_n)\cap m_{i\Diamond}(\overline{Y_0}\cap\cdots\cap \overline{Y_n})$, but $s\notin m^c_{i\Box}(Y_0\cap \cdots\cap Y_n)$. Then $sR^c_itu$ for some $t,u\notin Y_0\cap \cdots\cap Y_n$. By $s\in m_{i\Diamond}(\overline{Y_0}\cap\cdots\cap \overline{Y_n})$, it follows that $s\to_iv$ for some $v\in \overline{Y_0}\cap\cdots\cap \overline{Y_n}$. Since $sR^c_itu$ and $s\to_iv$, applying the property ATEUC gives us either $sR^c_itv$ or $sR^c_iuv$. Either case implies $v\in Y_0\cup \cdots\cup Y_n$: a contradiction.
\end{proof}

\begin{proposition}\label{prop.distribute}
Let $n\in\mathbb{N}$. Then
$$m^c_{i\Box}(Y_0\cap \cdots\cap Y_n)\cap m_{i\Box}(\overline{Y_1}\cup\cdots\cup\overline{Y_n}\cup X)\subseteq m^c_{i\Box}(X).$$
\end{proposition}

\begin{proof}
Suppose not, i.e., there exists $s$ such that $s\in m^c_{i\Box}(Y_0\cap \cdots\cap Y_n)\cap m_{i\Box}(\overline{Y_1}\cup\cdots\cup\overline{Y_n}\cup X)$, but $s\notin m^c_{i\Box}(X)$. Then $sR^c_itu$ for some $t,u\notin X$. By $sR^c_itu$ and the property INCL, $s\to_it$ and $s\to_iu$. By $s\in m^c_{i\Box}(Y_0\cap \cdots\cap Y_n)$ and $sR^c_itu$, it follows that $t\in Y_0\cap \cdots\cap Y_n$ or $u\in Y_0\cap \cdots\cap Y_n$. From $s\in m_{i\Box}(\overline{Y_1}\cup\cdots\cup\overline{Y_n}\cup X)$, it follows that $t,u\in\overline{Y_1}\cup\cdots\cup\overline{Y_n}\cup X$. Therefore, $t\in X$ or $u\in X$: a contradiction.
\end{proof}

Recall that the canonical model of $\mathbb{SMLKV}^r$ is defined as $\M=\lr{S,\{\to_i:i\in\Ag\},\{R^c_i:i\in\Ag,c\in\C\},V}$, where
\begin{itemize}
\item $S=\{\Sigma\mid \Sigma\text{ is a maximal consistent set}\}$
\item $\Sigma\to_i\Gamma$ iff for all $\phi$, if $\Box_i\phi\in \Sigma$, then $\phi\in \Gamma$.
\item $\Sigma R^c_i\Gamma\Lambda$ iff (1) for all $\phi$, if $\Box_i\phi\in \Sigma$, then $\phi\in \Gamma\cap \Lambda$, and (2) for all $\psi$, if $\Box^c_i\psi\in \Sigma$, then $\psi\in \Gamma\cup \Lambda$.
\item For each $p\in\BP$, for each $\Sigma\in S$, $\Sigma\in V(p)$ iff $p\in \Sigma$.\footnote{There is a typo on~\cite[Def.~3.1]{GuWang:2016}: ``$V(s)=\{p: p\in s\}$'' should be ``$V(p)=\{s:p\in s\}$'', due to the semantics therein.}
\end{itemize}

\begin{definition}
Let $\M$ be an $\mathcal{L}(\Box,\Box^c)$ model. The ultrafilter extension of $\M$ is a tuple $ue(\M)=\lr{Uf(S),\{\to^{ue}_i:i\in\Ag\},\{(R^c_i)^{ue}:i\in\Ag,c\in\C\},V^{ue}}$, where
\begin{itemize}
\item $Uf(S)=\{s\mid s\text{ is an ultrafilter extension over }S\}$
\item $s\to^{ue}_it$ iff for all $X$, if $m_{i\Box}(X)\in s$, then $X\in t$
\item $s(R^c_i)^{ue}tu$ iff (1) for all $X$, if $m_{i\Box}(X)\in s$, then $X\in t\cap u$, and (2) for all $Y$, if $m_{i\Box}^c(Y)\in s$, then $Y\in t\cup u$.
\item For each $p\in\BP$, for each $s\in Uf(S)$, $s\in V^{ue}(p)$ iff $V(p)\in s$.
\end{itemize}
\end{definition}

\begin{proposition}
$ue(\M)$ is an $\mathbf{MLKv}^r$ model.
\end{proposition}

\begin{proof}

\end{proof}

Given an ultrafilter $s\in Uf(S)$ such that $V(\Box^c_i\phi)\notin s$. Let $A=\{X\mid m_{i\Box}(X)\in s\}\cup\{\overline{V(\phi)}\}$ and $B=\{Y\mid m^c_{i\Box}(Y)\in s\}$. Because $m^c_{i\Box}(S)=S\in s$, $S\in B$, thus $B$ is nonempty. Moreover, $B$ is countable\footnote{Note that we suppose $\mathcal{P}(S)$ is countable, so is its subset $B$.}, we may list the element in $B$ as $Y_0,Y_1,\cdots$.

\begin{proposition}\label{prop.base}
For any $Y\in B$, $\{Y\}\cup A$ has the finite intersection property. Therefore, $A$ and every $Y\in B$ has the finite intersection property.
\end{proposition}

\begin{proof}
Let $Y\in B$. We show for any $X_1, \cdots, X_n$ such that $m_{i\Box}(X_k)\in s$ for $k=1,\cdots,n$, $X_1\cap \cdots\cap X_n\cap Y\cap \overline{V(\phi)}\neq \emptyset.$ Since $s\in Uf(S)$, we can obtain $m_{i\Box}(X_1)\cap\cdots\cap m_{i\Box}(X_n)\cap m^c_{i\Box}(Y)\cap \overline{V(\Box^c_i\phi)}\in s$. Since $s$ does not contain the empty set, $m_{i\Box}(X_1)\cap\cdots\cap m_{i\Box}(X_n)\cap m^c_{i\Box}(Y)\cap \overline{V(\Box^c_i\phi)}\neq \emptyset$. Thus there is an element $x$ in $m_{i\Box}(X_1)\cap\cdots\cap m_{i\Box}(X_n)\cap m^c_{i\Box}(Y)\cap \overline{V(\Box^c_i\phi)}$. By $x\in \overline{V(\Box^c_i\phi)}$, it follows that $x\notin V(\Box^c_i\phi)$, viz. $x\notin m^c_{i\Box}(V(\phi))$, and then there are $y,z$ such that $xR^c_iyz$ and $y\notin V(\phi)$ and $z\notin V(\phi)$, namely, $y,z\in \overline{V(\phi)}$. By $x\in m^c_{i\Box}(Y)$ and $xR^c_iyz$, it follows that $y\in Y$ or $z\in Y$. By $xR^c_iyz$ and the property of INCL, $x\to_iy$ and $x\to_iz$. Moreover, $x\in m_{i\Box}(X_1)\cap\cdots\cap m_{i\Box}(X_n)$, we can derive that $y,z\in X_1\cap \cdots\cap X_n$. Therefore at least one of $y$ and $z$ is in $X_1\cap \cdots\cap X_n\cap Y\cap \overline{V(\phi)}$, which means that $X_1\cap \cdots\cap X_n\cap Y\cap \overline{V(\phi)}\neq \emptyset.$
\end{proof}

Let $D_0=A\cup \{Y_0\}$ and $E_0=A$. Suppose $D_n$ and $E_n$ have been defined, we inductively define $D_{n+1}$ and $E_{n+1}$ as follows:
\begin{itemize}
\item If $D_n\cup \{Y_{n+1}\}$ has the f.i.p., then define $D_{n+1}=D_n\cup\{Y_{n+1}\}$ and $E_{n+1}=E_n$.
\item Otherwise, define $D_{n+1}=D_n$ and $E_{n+1}=E_n\cup \{Y_{n+1}\}$.
\item $D=\bigcup_{n\in\mathbb{N}}D_n$ and $E=\bigcup_{n\in\mathbb{N}}E_n$.
\end{itemize}

We will show that every $D_n$ and $E_n$ have the f.i.p., and then $D$ and $E$ both have the f.i.p..
\begin{proposition}\label{prop.atleastonefip}
For any $m\geq 0$, if $D_m$ has the f.i.p. and $D_m\cup\{Y_{m+1}\}$ has no f.i.p., then $E_m\cup\{Y_{m+1}\}$ has f.i.p.. Therefore, for all $m\in\mathbb{N}$, $D_m$ and $E_m$ have f.i.p..
\end{proposition}

\begin{proof}
Suppose, for a contradiction, that there is an $m\geq 0$ such that $D_m$ has the f.i.p., but both $D_m\cup\{Y_{m+1}\}$ and $E_m\cup\{Y_{m+1}\}$ have no f.i.p.. Let $U_m=D_m\backslash A$, $V_m=E_m\backslash A$, $\widetilde{U_{m}}=\{\overline{X}:X\in U_m\}$, and $\widetilde{V_m}=\{\overline{X}: X\in V_m\}$. It should be easy to see that $U_m, V_m, \widetilde{U_m},\widetilde{V_m}$ are all finite.

By supposition, there exist $X_1,\cdots,X_j$, $X_1',\cdots,X_k'$, $X_1'',\cdots,X_l''\in A$ such that
\[
\begin{array}{ll}
(1)&X_1\cap\cdots\cap X_j\cap \bigcap U_m\cap \overline{V(\phi)}\cap Y_{m+1}=\emptyset.\\
(2)&X_1'\cap\cdots\cap X_k'\cap \bigcap V_m\cap \overline{V(\phi)}\cap Y_{m+1}=\emptyset.\\
(3)&X_1''\cap\cdots\cap X_l''\cap \bigcap U_m\cap\overline{V(\phi)}\cap\bigcup V_m=\emptyset.\\
\end{array}
\]
(3) holds because for any $Y\in V_m$, $D_m\cup\{Y\}$ has no f.i.p. by construction.

This amounts to saying that
\[
\begin{array}{ll}
(4)&X_1\cap\cdots\cap X_j\subseteq \bigcup{\widetilde{U_m}}\cup {V(\phi)}\cup \overline{Y_{m+1}}.\\
(5)&X_1'\cap\cdots\cap X_k'\subseteq \bigcup{\widetilde{V_m}}\cup {V(\phi)}\cup \overline{Y_{m+1}}.\\
(6)&X_1''\cap\cdots\cap X_l''\subseteq \bigcup{\widetilde{U_m}}\cup V(\phi)\cup \bigcap \widetilde{V_m}.\\
\end{array}
\]
\weg{\[
\begin{array}{ll}
(4)&X_1\cap\cdots\cap X_j\cap \bigcap U_m\cap \overline{V(\phi)}\subseteq \overline{Y_{m+1}}.\\
(5)&X_1'\cap\cdots\cap X_k'\cap \bigcap V_m\cap \overline{V(\phi)}\subseteq \overline{Y_{m+1}}.\\
(6)&X_1''\cap\cdots\cap X_l''\cap \bigcap U_m\cap \overline{V(\phi)}\subseteq \bigcap \widetilde{V_m}.\\
\end{array}
\]}

By the remark after Def.~\ref{def.fouroperations}, we have

\[
\begin{array}{ll}
(7)&m_{i\Box}(X_1\cap\cdots\cap X_j)\subseteq m_{i\Box}(\bigcup{\widetilde{U_m}}\cup {V(\phi)}\cup \overline{Y_{m+1}}).\\
(8)&m_{i\Box}(X_1'\cap\cdots\cap X_k')\subseteq m_{i\Box}(\bigcup{\widetilde{V_m}}\cup {V(\phi)}\cup \overline{Y_{m+1}}).\\
(9)&m_{i\Box}(X_1''\cap\cdots\cap X_l'')\subseteq m_{i\Box}(\bigcup{\widetilde{U_m}}\cup V(\phi)\cup \bigcap \widetilde{V_m}).\\
\end{array}
\]

First, we have $m_{i\Diamond}(\bigcap U_m\cap \overline{V(\phi)})\in s$. Since otherwise, $m_{i\Box}(\overline{\bigcap U_m\cap \overline{V(\phi)}})\in s$, and then $\overline{\bigcap U_m\cap \overline{V(\phi)}}\in A$, thus $\overline{\bigcap U_m\cap \overline{V(\phi)}}\in D_m$. On the other hand, $U_m\subseteq D_m$ and $\overline{V(\phi)}\in D_m$. This would contradicts with the supposition that $D_m$ has the f.i.p..

Second, we have $m_{i\Diamond}(\overline{Y_{m+1}}\cap \bigcap \widetilde{V_m})\in s$. Since $m_{i\Box}(X_1),\cdots,m_{i\Box}(X_j)\in s$ and $s$ is closed under intersection, we infer $m_{i\Box}(X_1)\cap\cdots\cap m_{i\Box}(X_j)\in s$, equivalently, $m_{i\Box}(X_1\cap \cdots\cap X_j)\in s$. Then by (7) and the fact that $s$ is closed under supersets, $m_{i\Box}(\bigcup{\widetilde{U_m}}\cup {V(\phi)}\cup \overline{Y_{m+1}})\in s$. Similarly, by (9), we can obtain $m_{i\Box}(\bigcup{\widetilde{U_m}}\cup V(\phi)\cup \bigcap \widetilde{V_m})\in s$. Again, by the fact that $s$ is closed under intersection and supersets, we can derive that $m_{i\Box}(\bigcup{\widetilde{U_m}}\cup {V(\phi)}\cup (\overline{Y_{m+1}}\cap \bigcap \widetilde{V_m}))\in s$. Then due to the fact that $m_{i\Diamond}(\bigcap U_m\cap \overline{V(\phi)})\in s$ shown before, we conclude that $m_{i\Diamond}(\overline{Y_{m+1}}\cap \bigcap \widetilde{V_m})\in s$.

Since $m_{i\Diamond}(\overline{Y_{m+1}}\cap \bigcap \widetilde{V_m})\in s$, $m^c_{i\Box}(Y_{m+1})\in s$, $m^c_{i\Box}(Y)\in s$ for all $Y\in V_m$, by Prop.~\ref{prop.mix}, it follows that $m^c_{i\Box}(Y_{m+1}\cap \bigcap V_m)\in s$. Moreover, from (8), we can show that $m_{i\Box}(\bigcup{\widetilde{V_m}}\cup {V(\phi)}\cup \overline{Y_{m+1}})\in s$. Then by Prop.~\ref{prop.distribute} and the fact that $s$ is closed under intersection and supersets, $m^c_{i\Box}(V(\phi))\in s$, that is, $V(\Box^c_i\phi)\in s$: a contradiction.

\medskip

Now we can show for all $m\in\mathbb{N}$, $D_m$ and $E_m$ have f.i.p.. The proof is by induction on $m$.
\begin{itemize}
\item $m=0$: straightforward by Prop.~\ref{prop.base}.
\item $m=k+1$. By induction hypothesis, $D_k$ and $E_k$ have f.i.p.. We consider two cases:
      \begin{itemize}
      \item $D_k\cup\{Y_{k+1}\}$ has f.i.p.. By construction, $D_{k+1}=D_k\cup\{Y_{k+1}\}$ and $E_{k+1}=E_k$. Clearly, $D_{k+1}$ and $E_{k+1}$ have f.i.p..
      \item $D_k\cup\{Y_{k+1}\}$ has no f.i.p.. By construction, $D_{k+1}=D_k$ and $E_{k+1}=E_k\cup\{Y_{k+1}\}$. By IH, $D_{k+1}$ has f.i.p.; moreover, by the conclusion as shown above, $E_{k+1}$ has also f.i.p..
      \end{itemize}
\end{itemize}
\end{proof}

The following result is useful later.
\begin{proposition}\label{prop.fip}
Let $\Phi$ be an arbitrary set. Then $\Phi$ has the finite intersection property iff any finite subset of $\Phi$ has the finite intersection property.
\end{proposition}

\begin{proof}
Suppose $\Phi$ has no finite intersection property, that is, there are $X_1,\cdots,X_n\in\Phi$ (for some $n\in\mathbb{N}$) such that $X_1\cap \cdots\cap X_n=\emptyset$. Then $\{X_1,\cdots,X_n\}\subseteq \Phi$ has no finite intersection property.

Conversely, assume that there is a finite subset $\Psi$ of $\Phi$ has no finite intersection property. This means that there are $X_1,\cdots,X_n\in\Psi$ (for some $n\in\mathbb{N}$) such that $X_1\cap \cdots\cap X_n=\emptyset$. Clearly, $X_1,\cdots,X_n\in\Phi$. Therefore, $\Phi$ has no finite intersection property.
\end{proof}
\begin{proposition}
$D=\bigcup_{n\in\mathbb{N}}D_n$ and $E=\bigcup_{n\in\mathbb{N}}E_n$ both have the finite intersection property.
\end{proposition}

\begin{proof}
Suppose $D=\bigcup_{n\in\mathbb{N}}D_n$ has no finite intersection property. By Prop.~\ref{prop.fip}, there is a finite $\Gamma\subseteq D$ that has no finite intersection property. By construction, $\Gamma$ is contained in $D_n$ for some $n\in\mathbb{N}$. Using Prop.~\ref{prop.fip} again, $D_n$ has no finite intersection property, contradiction. The proof for $E$ is analogous.
\end{proof}

\begin{lemma}\label{lem.existencelemma}
Let $s\in Uf(S)$. If $V(\Box_i^c\phi)\notin s$, then there are $t,u\in Uf(S)$ such that $s(R^c_i)^{ue}tu$ and $V(\phi)\notin t$ and $V(\phi)\notin u$.
\end{lemma}

\begin{proof}
Suppose that $s\in Uf(S)$ and $V(\Box_i^c\phi)\notin s$. Define $A,B,D,E$ as above. According to the previous result, both $D$ and $E$ have the finite intersection property. Then using the Ultrafilter Theorem, there are $t,u\in Uf(S)$ such that $D\subseteq t$ and $E\subseteq u$. Then we have (1) for all $X$, if $m_{i\Box}(X)\in s$, then $X\in t\cap u$, and (2) for all $Y$, if $m^c_{i\Box}(Y)\in s$, then $Y\in t\cup u$. By definition of $(R^c_i)^{ue}$, we infer that $s(R^c_i)^{ue}tu$. Also, $\overline{V(\phi)}\in t$ and $\overline{V(\phi)}\in u$, and therefore $V(\phi)\notin t$ and $V(\phi)\notin u$.
\end{proof}

\begin{proposition}\label{prop.ultrafilter3}
For all formulas $\phi\in\mathcal{L}(\Box,\Box^c)$ and all ultrafilters $s$ over $S$, we have $V(\phi)\in s$ iff $ue(\M),s\vDash\phi$. As a corollary, for all $w\in S$, we have $(\M,w)\equiv_{\mathcal{L}(\Box,\Box^c)}(ue(\M),\pi_w)$.
\end{proposition}

\begin{proof}
By induction on $\phi\in\mathcal{L}(\Box,\Box^c)$. The nontrivial cases are $\Box_i\phi$ and $\Box_i^c\phi$. The proof for the case $\Box_i\phi$ can be shown as in~\cite[Prop.~2.59]{blackburnetal:2001}. We only consider the case $\Box_i^c\phi$.

Suppose that $ue(\M),s\nvDash\Box_i^c\phi$, then there exist $t,u\in Uf(S)$ such that $s(R^c_i)^{ue}tu$ and $ue(\M),t\nvDash \phi$ and $ue(\M),u\nvDash\phi$. By IH, $V(\phi)\notin t$ and $V(\phi)\notin u$. Then $V(\phi)\notin t\cup u$. By the second condition of definition of $(R^c_i)^{ue}$ and $s(R^c_i)^{ue}tu$, $m^c_{i\Box}(V(\phi))\notin s$, that is, $V(\Box^c_i\phi)\notin s$.

Conversely, assume that $V(\Box^c_i\phi)\notin s$, we need to find $t,u\in Uf(S)$ such that $s(R^c_i)^{ue}tu$ and $V(\phi)\notin t$ and $V(\phi)\notin u$. This is provided by Lemma~\ref{lem.existencelemma}.
\end{proof}

\begin{definition}[$\mathcal{L}(\Box,\Box^c)$-Saturation]\label{def.boxc-satu} Let $\M=\lr{S,\{\to_i:i\in\Ag\},\{R^c_i:i\in\Ag,c\in\C\},V}$ be an $\mathcal{L}(\Box,\Box^c)$-model. We say that $\M$ is $\mathcal{L}(\Box,\Box^c)$-saturated, if the following conditions are satisfied:
\begin{enumerate}
\item[(i)] for every state $s\in S$, every $i\in\Ag$, and every set $\Sigma\subseteq \mathcal{L}(\Box,\Box^c)$, if $\Sigma$ is finitely satisfiable in the set of $\to_i$-successors of $s$, then $\Sigma$ is also satisfiable in the set of $\to_i$-successors of $s$.
\item[(ii)] for every state $s\in S$, every $i\in\Ag$, and any sets $\Sigma,\Gamma\subseteq\mathcal{L}(\Box,\Box^c)$, {\em if} for any finite sets $\Sigma'\subseteq \Sigma$ and $\Gamma'\subseteq \Gamma$, there are states $t',u'\in S$ such that $sR^c_it'u'$ and $t'\vDash\Sigma',u'\vDash\Gamma'$, {\em then} there are states $t,u\in S$ such that $sR^c_itu$ and $t\vDash\Sigma$ and $u\vDash\Gamma$.
\end{enumerate}
\end{definition}

\begin{definition}[$\C$-Bisimulation~\cite{GuWang:2016}] Let $\M_1=\lr{S_1,\{\to^1_i:i\in\Ag\},\{R^c_i:i\in\Ag,c\in\C\},V_1}$ and $\M_2=\lr{S_2,\{\to^2_i:i\in\Ag\},\{Q^c_i:i\in\Ag,c\in\C\},V_2}$ be both $\mathcal{L}(\Box,\Box^c)$-models. A {\em $\C$-Bisimulation} between $\M_1$ and $\M_2$ is a nonempty relation $Z\subseteq S_1\times S_2$ such that the following conditions holds: for all $s_1Zs_2$, we have
\begin{enumerate}
\item[(Inv)] $s_1\in V_1(p)$ iff $s_2\in V_2(p)$ for all $p\in\BP$;
\item[(Zig)] if $s_1\to^1_it_1$, then there exists $t_2\in S_2$ such that $s_2\to^2_it_2$ and $t_1Zt_2$;
\item[(Zag)] if $s_2\to^2_it_2$, then there exists $t_1\in S_1$ such that $s_1\to^2_it_1$ and $t_1Zt_2$;
\item[(Kvb-Zig)] if $s_1R^c_it_1u_1$, then there are $t_2,u_2\in S_2$ such that $s_2Q^c_it_2u_2$ and $t_1Zt_2$ and $u_1Zu_2$;
\item[(Kvb-Zag)] if $s_2Q^c_it_2u_2$, then there are $t_1,u_1\in S_1$ such that $s_1R^c_it_1u_1$ and $t_1Zt_2$ and $u_1Zu_2$.
\end{enumerate}
We say that $(\M,s)$ and $(\N,t)$ are {\em $\C$-bisimilar}, notation: $(\M,s)\bis_{\C}(\N,t)$, if there is a $\C$-bisimulation between $\M$ and $\N$ such that $sZt$.
\end{definition}

\begin{proposition}
Let $\M_1=\lr{S_1,\{\to^1_i:i\in\Ag\},\{R^c_i:i\in\Ag,c\in\C\},V_1}$ and $\M_2=\lr{S_2,\{\to^2_i:i\in\Ag\},\{Q^c_i:i\in\Ag,c\in\C\},V_2}$ be both $\mathcal{L}(\Box,\Box^c)$-saturated models and $s_1\in S_1,s_2\in S_2$. If $(\M_1,s_1)\equiv_{\mathcal{L}(\Box,\Box^c)}(\M_2,s_2)$, then $(\M_1,s_1)\bis_{\C}(\M_2,s_2)$. That is, the class of $\mathcal{L}(\Box,\Box^c)$-saturated models has the Hennessy-Milner property.
\end{proposition}

\begin{proof}
Let $\M_1=\lr{S_1,\{\to^1_i:i\in\Ag\},\{R^c_i:i\in\Ag,c\in\C\},V_1}$ and $\M_2=\lr{S_2,\{\to^2_i:i\in\Ag\},\{Q^c_i:i\in\Ag,c\in\C\},V_2}$ be both $\mathcal{L}(\Box,\Box^c)$-saturated models and $s_1\in S_1,s_2\in S_2$. It is sufficient to show that $\equiv_{\mathcal{L}(\Box,\Box^c)}$ satisfies the five conditions of $\C$-bisimulation. The first three conditions can be proved as in~\cite[Prop.~2.54]{blackburnetal:2001}, we need only show the conditions (Kvb-Zig) and (Kvb-Zag). For this, it suffices to show (Kvb-Zig).

Assume that $s_1\equiv_{\mathcal{L}(\Box,\Box^c)}s_2$ and $s_1R^c_it_1u_1$, to find $t_2,u_2\in S_2$ such that $s_2Q^c_it_2u_2$ and $t_1\equiv_{\mathcal{L}(\Box,\Box^c)}t_2$ and $u_1\equiv_{\mathcal{L}(\Box,\Box^c)}u_2$. Let $\Sigma=\{\phi\mid t_1\vDash\phi\}$ and $\Gamma=\{\psi\mid u_1\vDash\psi\}$. Then for any finite set $\Sigma'\subseteq \Sigma$, $t_1\vDash\bigwedge\Sigma'$, and for any finite set $\Gamma'\subseteq \Gamma$, $u_1\vDash\bigwedge\Gamma'$. Thus $s_1\vDash\Diamond^c_i(\bigwedge\Sigma',\bigwedge\Gamma')$\footnote{$\Diamond^c_i(\bigwedge\Sigma',\bigwedge\Gamma')$ is definable in $\mathcal{L}(\Box,\Box^c)$ due to~\cite[Lem.~4.1]{GuWang:2016}.}. By assumption that $s_1\equiv_{\mathcal{L}(\Box,\Box^c)}s_2$, $s_2\vDash\Diamond^c_i(\bigwedge\Sigma',\bigwedge\Gamma')$, and then there are $t_1',u_1'\in S_2$ such that $s_2Q^c_it_1'u_1'$ and $t_1'\vDash\bigwedge\Sigma'$ and $u_1'\vDash\bigwedge\Gamma'$. Since $\M_2$ is $\mathcal{L}(\Box,\Box^c)$-saturated, there are $t_2,u_2\in S_2$ such that $s_2Q^c_it_2u_2$ and $t_2\vDash\Sigma$ and $u_2\vDash\Gamma$. Thus we have $t_1\equiv_{\mathcal{L}(\Box,\Box^c)}t_2$: for any $\phi\in \mathcal{L}(\Box,\Box^c)$, if $t_1\vDash\phi$, then $\phi\in\Sigma$, and then $t_2\vDash\phi$; if $t_1\nvDash\phi$, i.e. $t_1\vDash\neg\phi$, then $\neg\phi\in\Sigma$, and then $t_2\vDash\neg\phi$, i.e. $t_2\nvDash\phi$. Similarly, we can show that $u_1\equiv_{\mathcal{L}(\Box,\Box^c)}u_2$.
\end{proof}

\begin{proposition}
Let $\M$ be an $\mathcal{L}(\Box,\Box^c)$ model. Then $ue(\M)$ is $\mathcal{L}(\Box,\Box^c)$-saturated.
\end{proposition}

\begin{proof}
It suffices to show that $ue(\M)$ satisfies the two conditions of Def.~\ref{def.boxc-satu}. The first condition can be shown as in~\cite[Prop.~2.61]{blackburnetal:2001}. We only show the second condition.

Let $s\in Uf(S)$, $i\in\Ag$, and $\Sigma,\Gamma\subseteq\mathcal{L}(\Box,\Box^c)$. Suppose for any finite sets $\Sigma'\subseteq \Sigma$ and $\Gamma'\subseteq \Gamma$, there are states $t',u'\in Uf(S)$ such that $s(R^c_i)^{ue}t'u'$ and $t'\vDash\Sigma',u'\vDash\Gamma'$, to find $t,u\in Uf(S)$ such that $s(R^c_i)^{ue}tu$ and $t\vDash\Sigma$ and $u\vDash\Gamma$. 

Define $A=\{X\mid m_{i\Box}(X)\in s\}$, $A_1=\{V(\phi)\mid \phi\in\Sigma\}$, $A_2=\{V(\psi)\mid \psi\in\Gamma\}$, $B=\{Y\mid m^c_{i\Box}(Y)\in s\}$, $C_1=A\cup A_1$, $C_2=A\cup A_2$. 
Since $B$ is countable, we may enumerate its elements as $Y_0,Y_1,\cdots$. First, we show that
\begin{claim}
$C_1$ and $C_2$ both have f.i.p., and for any $Y\in B$, at least one of $C_1\cup\{Y\}$ and $C_2\cup\{Y\}$ has f.i.p..
\end{claim}

\begin{proof}
Let $X_1,\cdots,X_j\in A$ for $l=1,\cdots,j$, and let $\phi_1,\cdots,\phi_g\in \Sigma$, $\psi_1,\cdots,\psi_h\in\Gamma$. Since $\{\phi_1,\cdots,\phi_g\}\subseteq_{fin}\Sigma$ and $\{\psi_1,\cdots,\psi_h\}\subseteq_{fin}\Gamma$. By supposition of the proposition, there are $t',u'\in Uf(S)$ such that $s(R^c_i)^{ue}t'u'$ and $t'\vDash\phi_1\land\cdots\land\phi_g$ and $u'\vDash\psi_1\land\cdots\land\psi_h$. By Prop.~\ref{prop.ultrafilter3}, we can infer that $V(\phi_1)\cap \cdots\cap V(\phi_g)\in t'$ and $V(\psi_1)\cap\cdots\cap V(\psi_h)\in u'$. By $s(R^c_i)^{ue}t'u'$ and $m_{i\Box}(X_l)\in s$ for $l=1,\cdots,j$, we have $X_1\cap \cdots\cap X_j\in t'\cap u'$, and then $X_1\cap \cdots\cap X_j\cap V(\phi_1)\cap \cdots\cap V(\phi_g)\in t'$ and $X_1\cap\cdots\cap X_j\cap V(\psi_1)\cap\cdots\cap V(\psi_h)\in u'$. Because $t'$ and $u'$ do not contain the empty set, $X_1\cap \cdots\cap X_j\cap V(\phi_1)\cap \cdots\cap V(\phi_g)\neq \emptyset$ and $X_1\cap\cdots\cap X_j\cap V(\psi_1)\cap\cdots\cap V(\psi_h)\neq \emptyset$. This means that $C_1$ and $C_2$ both have f.i.p.. Moreover, For any $Y\in B$, we have $m^c_{i\Box}(Y)\in s$, which follows that $Y\in t'\cup u'$ due to the fact that $s(R^c_i)^{ue}t'u'$. If $Y\in t'$, then $X_1\cap \cdots\cap X_j\cap V(\phi_1)\cap \cdots\cap V(\phi_g)\cap Y\in t'$, and then $X_1\cap \cdots\cap X_j\cap V(\phi_1)\cap \cdots\cap V(\phi_g)\cap Y\neq \emptyset$, which means that $C_1\cup\{Y\}$ has f.i.p.. If $Y\in u'$, with a similar argument, we can show that $C_2\cup\{Y\}$ has f.i.p..
\weg{\begin{enumerate}
\item[(1)] $X_1\cap \cdots\cap X_j\cap V(\phi_1)\cap \cdots\cap V(\phi_g)\neq \emptyset$ and $X_1'\cap\cdots\cap X_k'\cap V(\psi_1)\cap\cdots\cap V(\psi_h)\neq \emptyset$, and
\item[(2)] For any $Y\in C$, either $X_1\cap \cdots\cap X_j\cap V(\phi_1)\cap \cdots\cap V(\phi_g)\cap Y\neq \emptyset$, or $X_1'\cap\cdots\cap X_k'\cap V(\psi_1)\cap\cdots\cap V(\psi_h)\cap Y\neq \emptyset$.
\end{enumerate}}
\end{proof}
We have thus shown the claim. Thus $C_1\cup\{Y_0\}$ or $C_2\cup\{Y_0\}$ has f.i.p.. W.l.o.g. we may assume that $C_1\cup\{Y_0\}$ has f.i.p., and define $D_0=C_1\cup\{Y_0\}$, $E_0=C_2$. Suppose $D_n$ and $E_n$ have been defined, we inductively define $D_{n+1}$ and $E_{n+1}$ as follows:
\begin{itemize}
\item If $D_n\cup \{Y_{n+1}\}$ has f.i.p., then define $D_{n+1}=D_n\cup\{Y_{n+1}\}$ and $E_{n+1}=E_n$.
\item Otherwise, define $D_{n+1}=D_n$ and $E_{n+1}=E_n\cup \{Y_{n+1}\}$.
\item $D=\bigcup_{n\in\mathbb{N}}D_n$ and $E=\bigcup_{n\in\mathbb{N}}E_n$.
\end{itemize}

We will show that every $D_n$ and $E_n$ have the f.i.p., and then $D$ and $E$ both have f.i.p..
\begin{claim}
For any $m\geq 0$, if $D_m$ has f.i.p. and $D_m\cup\{Y_{m+1}\}$ has no f.i.p., then $E_m\cup\{Y_{m+1}\}$ has f.i.p.. Therefore, for all $m\in\mathbb{N}$, $D_m$ and $E_m$ have f.i.p..
\end{claim}

\begin{proof}
Let $m\geq 0$. Assume, for a contradiction, that $D_m$ has f.i.p., but neither $D_m\cup\{Y_{m+1}\}$ nor $E_m\cup\{Y_{m+1}\}$ has f.i.p.. Let $U_m=D_m\backslash C_1$, $V_m=E_m\backslash C_2$, $\widetilde{U_{m}}=\{\overline{X}:X\in U_m\}$, and $\widetilde{V_m}=\{\overline{X}: X\in V_m\}$. It should be easy to see that $U_m, V_m, \widetilde{U_m},\widetilde{V_m}$ are all finite.

By supposition, there exist $X_1,\cdots,X_j$, $X_1',\cdots,X_k'$, $X_1'',\cdots,X_l''\in A$, $\phi_1,\cdots,\phi_g\in \Sigma$, $\psi_1,\cdots,\psi_h\in\Gamma$ such that
\[
\begin{array}{ll}
(1)&X_1\cap\cdots\cap X_j\cap \bigcap U_m\cap V(\phi_1)\cap\cdots\cap V(\phi_g)\cap Y_{m+1}=\emptyset.\\
(2)&X_1'\cap\cdots\cap X_k'\cap \bigcap V_m\cap V(\psi_1)\cap\cdots\cap V(\psi_h)\cap Y_{m+1}=\emptyset.\\
(3)&X_1''\cap\cdots\cap X_l''\cap \bigcap U_m\cap V(\phi_1)\cap\cdots\cap V(\phi_g)\cap\bigcup V_m=\emptyset.\\
\end{array}
\]
(3) holds because for any $Y\in V_m$, $D_m\cup\{Y\}$ has no f.i.p. by construction.

For the sake of simplicity, let $\phi=\phi_1\land\cdots\land\phi_g$ and $\psi=\psi_1\land\cdots\land\psi_h$, then we obtain
\[
\begin{array}{ll}
(1')&X_1\cap\cdots\cap X_j\subseteq \overline{Y_{m+1}}\cup \bigcup \widetilde{U_m}\cup \overline{V(\phi)}.\\
(2')&X_1'\cap\cdots\cap X_k'\subseteq \overline{Y_{m+1}}\cup \bigcup \widetilde{V_m}\cup \overline{V(\psi)}.\\
(3')&X_1''\cap\cdots\cap X_l''\subseteq\bigcup \widetilde{U_m}\cup\bigcap \widetilde{V_m}\cup \overline{V(\phi)}.\\
\end{array}
\]

By the monotonicity of $m_{i\Box}$, we have
\[
\begin{array}{ll}
(4)&m_{i\Box}(X_1\cap\cdots\cap X_j)\subseteq m_{i\Box}(\overline{Y_{m+1}}\cup \bigcup \widetilde{U_m}\cup \overline{V(\phi)}).\\
(5)&m_{i\Box}(X_1'\cap\cdots\cap X_k')\subseteq m_{i\Box}(\overline{Y_{m+1}}\cup \bigcup \widetilde{V_m}\cup \overline{V(\psi)}).\\
(6)&m_{i\Box}(X_1''\cap\cdots\cap X_l'')\subseteq m_{i\Box}(\bigcup \widetilde{U_m}\cup\bigcap \widetilde{V_m}\cup \overline{V(\phi)}).\\
\end{array}
\]

First, we have $m_{i\Diamond}(\bigcap U_m\cap V(\phi))\in s$: if not, i.e., $m_{i\Box}(\overline{\bigcap U_m\cap V(\phi)})\in s$, then $\overline{\bigcap U_m\cap V(\phi)}\in A$, and thus $\overline{\bigcap U_m\cap V(\phi)}\in D_m$; however, $U_m\subseteq D_m$ and $V(\phi_1),\cdots,V(\phi_g)\in D_m$\footnote{Note that $V(\phi)$ is not necessarily belong to $D_m$, since $\phi=\phi_1\land\cdots\land\phi_g$ is not necessarily belong to $\Sigma$.}, which contradicts the assumption that $D_m$ has f.i.p..

Second, we have $m_{i\Diamond}(\overline{Y_{m+1}}\cap \bigcap \widetilde{V_m})\in s$, and thus $m^c_{i\Box}(\overline{V(\psi)})\in s$, which can be shown as in Prop.~\ref{prop.atleastonefip}.

Since $\{\phi_1,\cdots,\phi_g\}\subseteq_{fin}\Sigma$ and $\{\psi_1,\cdots,\psi_h\}\subseteq_{fin}\Gamma$, by supposition of the proposition, there are $t',u'\in Uf(S)$ such that $s(R^c_i)^{ue}t'u'$ and $t'\vDash\phi_1\land\cdots\land\phi_g$ and $u'\vDash\psi_1\land\cdots\land\psi_h$. By Prop.~\ref{prop.ultrafilter3}, $V(\phi_1)\cap \cdots\cap V(\phi_g)\in t'$ and $V(\psi_1)\cap \cdots\cap V(\psi_h)\in u'$. Since $m^c_{i\Box}(\overline{V(\psi)})\in s$, it follows that $\overline{V(\psi)}\in t'\cup u'$, then $\overline{V(\psi)}\in t'$, i.e. $V(\psi_1)\cap\cdots\cap V(\psi_h)\notin t'$, and thus $V(\psi_b)\notin t'$ for some $b\in [1,h]$.

Since $\{\phi_1,\cdots,\phi_g\}\subseteq_{fin}\Sigma$ and $\{\psi_1,\cdots,\psi_h\}\subseteq_{fin}\Gamma$, by supposition of the proposition, there are $t',u'\in Uf(S)$ such that $s(R^c_i)^{ue}t'u'$ and $t'\vDash\phi_1\land\cdots\land\phi_g$ and $u'\vDash\psi_1\land\cdots\land\psi_h$. By Prop.~\ref{prop.ultrafilter3}, $V(\phi_1)\cap \cdots\cap V(\phi_g)\in t'$ and $V(\psi_1)\cap \cdots\cap V(\psi_h)\in u'$. Since $X_1,\cdots,X_j\in A$, i.e., $m_{i\Box}(X_1),\cdots, m_{i\Box}(X_j)\in s$, we can deduce $X_1\cap \cdots\cap X_j\in t'\cap u'$. Similarly, we can show that $X_1'\cap\cdots\cap X_k'\in t'\cap u'$ and $X_1''\cap\cdots\cap X_l''\in t'\cap u'$. Thus $X_1\cap\cdots\cap X_j\cap V(\phi_1)\cap\cdots\cap V(\phi_g)\in t'$, $X_1'\cap\cdots\cap X_k'\cap V(\psi_1)\cap\cdots\cap V(\psi_h)\in u'$, and $X_1''\cap\cdots\cap X_l''\cap V(\phi_1)\cap\cdots\cap V(\phi_g)\in t'$. Then using $(1')$-$(3')$, we get $\overline{Y_{m+1}}\cup \bigcup \widetilde{U_m}\in t'$, $\overline{Y_{m+1}}\cup \bigcup \widetilde{V_m}\in u'$, and $\bigcup \widetilde{U_m}\cup\bigcap \widetilde{V_m}\in t'$.

Note that for any $Y\in U_m\cup V_m\cup\{Y_{m+1}\}$, $Y\in t'\cup u'$. Since $U_m$, $V_m$ are finite and disjoint, w.l.o.g. we may assume that $U_m=\{Z_1,\cdots,Z_j\}$ and $V_m=\{Z_1',\cdots,Z_k'\}$ for some $j,k\in\mathbb{N}$. Then (4)~$\overline{Y_{m+1}}\cup \overline{Z_1}\cup\cdots\cup\overline{Z_j}\in t'$, (5)~$\overline{Y_{m+1}}\cup\overline{Z_1'}\cup\cdots\cup \overline{Z_k'}\in u'$ and (6)~$\overline{Z_1}\cup\cdots\cup\overline{Z_j}\cup(\overline{Z_1'}\cap\cdots\cap\overline{Z_k'})\in t'$. By (4), we can obtain either $Y_{m+1}\notin t'$ or $Z_l\notin t'$ for some $l\in[1,j]$.

Suppose that $Y_{m+1}\notin t'$. Then $Y_{m+1}\in u'$. By (5), we can show that $Z_f'\notin u'$ for some $f\in[1,k]$, and then $Z_f'\in t'$, and hence $Z_1'\cup\cdots\cup Z_k'\in t'$. By (6), $Z_l\notin t'$ for some $l\in[1,j]$.

\end{proof}
\end{proof}}

\section{Conclusion}\label{sec.conclusion}



In this paper, we proposed a uniform method of constructing ultrafilter extensions out of canonical models, based on the similarity between ultrafilters and maximal consistent sets. We illustrated this method with ultrafilter extensions of models for normal modal logics and for classical modal logics, which can help us understand why the known ultrafilter extensions are so defined. We then applied it to obtain ultrafilter extensions of any Kripke model and of any neighborhood model for contingency logic. Our results also hold for multi-modal cases.


Although we only investigated ultrafilter extensions of any Kripke/neighborhood model for standard modal logic and contingency logic, we believe our method also works for many other logics, special models (monotonic models, regular models, reflexive models, etc.), and many other semantics (algebraic semantics, coalgebraic semantics, etc.). Once we have the canonical model of a logic, we can construct the notion of ultrafilter extension, by using the above-mentioned uniform method, in an automatic way. With suitable notions of bisimilarity and saturation, we can show the ultrafilter extension is as desired.



\bibliographystyle{plain}
\bibliography{biblio2018,neighbor,biblio2016}

\end{document}